\theoremstyle{plain}%
\newtheorem{theorem}{Theorem}[]%
\newtheorem{proposition}[theorem]{Proposition}%
\newtheorem{lemma}[theorem]{Lemma}%
\newtheorem{remark}[theorem]{Remark}%
\newtheorem{definition}[theorem]{Definition}%
\definecolor{colorGreen}{rgb}{0.,.67,0}
\definecolor{colorRed}{rgb}{0.67,0.,0}
\definecolor{colorBlue}{rgb}{0.,0.,0.67}
\begin{document}
%


\title[Thin film model with capillary effects and insoluble surfactant]{%
Weak solutions to a thin film model with\\ capillary effects and insoluble surfactant
}%
\thanks{This work was partially supported by the french-german PROCOPE project 20190SE}
%
%
%
\author{Joachim Escher}
\address{Leibniz Universit\"at Hannover, Institut f\" ur Angewandte Mathematik \\ Welfengarten 1, D--30167 Hannover, Germany}
\email{escher@ifam.uni-hannover.de}
\author{Matthieu Hillairet}
\address{CEREMADE, UMR CNRS~7534, Universit\'e de Paris-Dauphine \\ Place du Mar\'echal De Lattre De Tassigny,
F--75775 Paris Cedex 16, France}
\email{hillairet@ceremade.dauphine.fr}
\author{Philippe Lauren\c{c}ot}
\address{Institut de Math\'ematiques de Toulouse, CNRS UMR~5219, Universit\'e de Toulouse \\ F--31062 Toulouse Cedex 9, France}
\email{laurenco@math.univ-toulouse.fr}
\author{Christoph Walker}
\address{Leibniz Universit\"at Hannover, Institut f\" ur Angewandte Mathematik \\ Welfengarten 1, D--30167 Hannover, Germany}
\email{walker@ifam.uni-hannover.de}
%
%
%
%
\begin{abstract}
The paper focuses on a model describing the spreading of an insoluble surfactant on a thin viscous film with capillary effects taken into account. The governing equation for the film height is degenerate parabolic of fourth order and coupled to a { second order} parabolic equation for the surfactant concentration. It is shown that nonnegative weak solutions exist under { natural} assumptions on the surface tension coefficient.
\end{abstract}
%
%
%
%
%
%
\selectlanguage{english}
%

%
%
%
\maketitle

\selectlanguage{english}
%
%
%
\section{Introduction}\label{sec:intro}
%
%
The modeling of the spreading of an insoluble surfactant on a thin viscous film leads to a coupled system of degenerate parabolic equations describing the space and time evolution of the height $h\ge 0$ of the film and the surface concentration $\Gamma\ge 0$ of surfactant. Assuming the film thickness to be small enough so that lubrication theory is applicable, taking into account capillary effects { but} neglecting gravitational and intermolecular (van der Waals) forces, the following system is obtained \cite{dWGC94,GW06,JG92} 
\begin{eqnarray}
\label{a1}
\partial_t h + \partial_x \left( \frac{1}{3}\ h^3\ \partial_x^3 h + \frac{1}{2}\ h^2\ \partial_x \sigma(\Gamma) \right) & =& 0\,, \quad (t,x)\in (0,\infty)\times (0,1)\,, \\
\label{a2}
 \partial_t \Gamma + \partial_x \left( \frac{1}{2}\ h^2\ \Gamma\ \partial_x^3 h + h\ \Gamma\ \partial_x \sigma(\Gamma) \right) & = & D\ \partial_x^2 \Gamma\,, \quad (t,x)\in (0,\infty)\times (0,1)\,, 
\end{eqnarray}
with homogeneous Neumann boundary conditions
\begin{equation}
\label{a3}
\partial_x h(t,x) = \partial_x^3 h(t,x) = \partial_x \Gamma(t,x) = 0\,, \quad (t,x)\in (0,\infty)\times \{0,1\}\,,
\end{equation}
and initial conditions
\begin{equation}
\label{a4}
(h,\Gamma)(0)=(h_0,\Gamma_0)\,, \quad x\in (0,1)\,.
\end{equation}
Here, $\sigma(\Gamma)$ denotes the surface tension which depends on the local concentration of surfactant, and $D>0$ stands for the surface diffusivity of the surfactant. Since $\sigma$ is defined up to a constant, we may assume without loss of generality that $\sigma(1)=0$ as a normalization condition. As the presence of surfactant reduces surface tension, $\sigma$ is a non-increasing function of $\Gamma$; for instance, 
\begin{equation}
\label{a5}
\sigma_\beta(s) := (\beta+1)\ \left[ 1- s + \left( \frac{\beta+1}{\beta} \right)^{1/3}\ s \right]^{-3} - \beta\,, \quad s\ge 0\,,
\end{equation}
with $\beta\in (0,\infty)$ and its limit as $\beta\to\infty$ (which is often assumed in applications)
\begin{equation}
\label{a6}
\sigma_\infty(s) := 1-s\,, \quad s\ge 0\,.
\end{equation}
The system \eqref{a1}-\eqref{a4} is a fully coupled nonlinear system of parabolic equations featuring a degeneracy where $h$ vanishes, a fact which cannot be excluded \textit{a priori}. Thus, classical solutions are unlikely to exist for all times in general and only local existence of smooth solutions to \eqref{a1}--\eqref{a2} in the absence of capillarity have been shown in \cite{Re96b,Re97}. The alternative is to study the Cauchy problem in a { framework} of weak solutions (see Section~\ref{sec:mr} for a precise definition) and this approach has been successfully employed to establish the existence of weak solutions to systems similar to \eqref{a1}--\eqref{a2}, for instance when the capillarity effect are neglected but the gravitational ones are accounted for \cite{EHLWxx} or when $\Gamma$ is replaced by $\lambda(\Gamma)=\max{\{0, 1-\Gamma\}}+1$ in \eqref{a2} \cite{BGN03,BN04}, a finite element numerical scheme being also developed in these two papers.

To our knowledge, existence of weak solutions has only been tackled in \cite{GW06} where { this} is proved under further technical assumptions on the surface tension $\sigma$. Namely, given a surface tension $\sigma \in \mathcal{C}^{2,1}_{loc}(\mathbb R),$  defining the free energy $g_{\sigma}$ by
\begin{equation} \label{a8}
g_{\sigma}(1) = g_{\sigma}'(1)=0,  \qquad  g_{\sigma}''(s) = - \dfrac{\sigma'(s)}{s}\qquad \text{ for } \, s \in \mathbb R,
\end{equation}
the authors assume the following:
\begin{enumerate}
\item[(A4)] The function $g_{\sigma}$ lies in $\mathcal{C}^{2,1}_{loc}(\mathbb R).$
\item[(A5)] There exists $c_g >0$ such that $g''_{\sigma}(s) \geq c_g$ for all $s \in \mathbb R.$
\item[(A6)] There exist $C_g$ { and some $r \in (0,2)$} for which $g''_{\sigma}(s) \leq C_g(|s|^r +1)$ for all $s \in \mathbb R$.
\end{enumerate}
The need in \cite{GW06} to define $\sigma$ and $g_\sigma$ in $\mathbb{R}$ instead of the physically relevant range $[0,\infty)$ for surfactant concentration stems from the fact that the weak solution $(h,\Gamma)$ to \eqref{a1}-\eqref{a2} constructed in \cite{GW06} might not satisfy $\Gamma\ge 0$. The extension of $\sigma$ and $g_\sigma$ to negative values then induces several limiting conditions. Namely, the convexity (A5) of $g_\sigma$ in $\mathbb{R}$ requires not only that $\sigma'(s)<0$ for $s>0$ as expected but also $\sigma'(s)>0$ for $s<0$, which implies $\sigma'(0)=0$ and thus excludes surface tensions $\sigma$ like $\sigma_\beta$ in \eqref{a5} and $\sigma_\infty$ in \eqref{a6}. In addition, assumption (A5) yields $\sigma'(s)\le -c_g \ s$ for $s\in \mathbb R$ so that $\sigma$ necessarily has a quadratic decay at infinity. This again excludes the previous examples $\sigma_\beta$ in \eqref{a5} and $\sigma_\infty$ in \eqref{a6}.


The aim of the present paper is to construct a weak solution to \eqref{a1}--\eqref{a2} under weaker assumptions on the surface tension $\sigma$ (satisfied in particular by $\sigma_\infty$) and such that both $h$ and $\Gamma$ are nonnegative throughout time evolution. More precisely, we assume that the surface tension $\sigma$ satisfies:
\begin{itemize}
\item[(H1)] $\sigma \in \mathcal{C}^{1}((0,\infty)) \cap \mathcal{C}([0,\infty))$ with $\sigma(1)=0$. 
\item[(H2)] There exist $\sigma_0,{ \sigma_{1}} \in (0,\infty)$ and $\theta \in [0,1)$ for which:
\begin{equation} \label{case.2}
-{\sigma}_0 < \sigma'(s) \leq -\dfrac{{ \sigma_{1}}}{1+ s^{\theta}} \quad \text{ for } \, s  \ge 1, \qquad 
-\sigma_{0}< \sigma'(s) < 0 \quad \text{ for } \, s  \in (0,1).
\end{equation}
\end{itemize}
The assumptions (H1)--(H2) include physically relevant surface tensions $\sigma$, which may slowly decrease at infinity.  
In particular, $\sigma_\infty$ is included (but not $\sigma_\beta$ for $\beta\in (0,\infty)$).\\

In the next section, we introduce the definition of weak solutions and give precise statements for
our existence result. To prove this result, we first construct {\it nonnegative} solutions in the framework of \cite{GW06} under assumptions (A4)--(A6). { This improves the results of \cite{GW06} in that the surfactant concentration $\Gamma$ stays nonnegative through time evolution}. This is the content of {\bf Section~\ref{sec:ens}}. Then we extend the construction to a surface tension $\sigma$ merely satisfying (H1)--(H2) by approximating $\sigma$ with surface tensions $\sigma_k$ satisfying (A4)--(A6) and studying compactness properties of their associated weak solutions. The construction of $\sigma_k$ and the compactness argument are presented in {\bf Section~\ref{sec:stabsigma}}.

%
\section{Weak solutions and main results} \label{sec:mr}
%

To introduce the definition of weak solutions, we first derive energy estimates satisfied by smooth nonnegative
solutions to \eqref{a1}--\eqref{a2}. So, let us consider a non-increasing and smooth surface tension $\sigma$ and a smooth solution $(h,\Gamma)$ to \eqref{a1}--\eqref{a3} in $(0,T)\times (0,1)$ for some $T>0$, both functions being uniformly bounded from below by a positive constant.
First, we note that by \eqref{a1}-\eqref{a3} there holds 
\begin{equation} \label{aconserver}
\dfrac{\textrm{d}}{\textrm{dt}} \left[ \displaystyle{\int_0^1} h \text{ d$x$} \right]  = 0, \qquad \dfrac{\textrm{d}}{\textrm{dt}} \left[ \displaystyle{\int_0^1} \Gamma \text{ d$x$} \right]  = 0.
\end{equation}
Setting
\begin{equation} \label{a8bis}
g_{\sigma}(1) = g_{\sigma}'(1)=0,  \qquad  g_{\sigma}''(s) = - \dfrac{\sigma'(s)}{s}\qquad \text{for} \, s \in (0,\infty), 
\end{equation}
 it follows from \eqref{a1}--\eqref{a3} that
\begin{eqnarray*} 
& & \dfrac{\textrm{d}}{\textrm{dt}} \left[ \displaystyle{\int_{0}^1} \left( \dfrac{|\partial_x h|^2}{2} + g_{\sigma}(\Gamma)\right) \text{ d$x$} \right] = \displaystyle{\int_{0}^1} \left[  \partial^2_{x} h \; \partial_x \left( \dfrac{h^{3}}{3} \ \partial^3_{x} h + \dfrac{h^2}{2} \partial_x \sigma(\Gamma) \right) \right] \text{ d$x$}\\[4pt]
& & \hspace{3cm} +  \displaystyle{\int_{0}^1} \left[ g_{\sigma}''(\Gamma) \; \partial_x \Gamma \; \left(\dfrac{h^{2}}{2} \ \Gamma \ \partial^3_{x} h + {h} \ \Gamma \ \partial_x \sigma(\Gamma) -D \ \partial_x \Gamma \right)  \right] \text{ d$x$} \\
\\[4pt]
& & \quad = - \displaystyle{\int_{0}^1} \left[  \dfrac{h^3}{3} \ |\partial^3_{x} h|^2 + h^2\ \partial_x \sigma(\Gamma) \ \partial^3_{x} h+ h |\partial_x \sigma(\Gamma)|^2 - D \dfrac{\sigma'(\Gamma)}{\Gamma} |\partial_x \Gamma|^2  \right] \text{ d$x$} .
\phantom{12345678}
\end{eqnarray*}
We { abbreviate} product terms { by} introducing $J_s^2$ and $J_f^2$, where
\begin{eqnarray}
\label{a10}
J_f = J_f[h,\Gamma] & :=&  \frac{h^{3/2}}{3} \ \partial^3_{x} h  + \frac{{h^{1/2}}}{2} \ \partial_{x} \sigma(\Gamma), \\
\label{a11}
J_s = J_s[h,\Gamma] &:=  & \frac{h^{3/2}}{2} \ \partial^3_{x} h  + {h}^{1/2} \ \partial_x \sigma(\Gamma).
\end{eqnarray}
This yields
\begin{equation}\label{a18} 
\begin{array}{l}
\dfrac{\textrm{d}}{\textrm{dt}} \left[ \displaystyle{\int_{0}^1} \left( \dfrac{|\partial_x h|^2}{2} + g_{\sigma}(\Gamma)\right) \text{ d$x$} \right] 
\\[16pt]
\phantom{12345}= 
- \displaystyle{\int_{0}^1} \left[ \dfrac{3}{2} |J_f[h,\Gamma]|^2 + \dfrac{1}{2} |J_s[h,\Gamma]|^2 + \dfrac{1}{24} {h^{3}} | \partial^3_{x} h|^2 + \dfrac{1}{8} {h} | \partial_x \sigma(\Gamma) |^2 \right] \text{ d$x$}  \\[16pt]
\phantom{12345678} + D \displaystyle{\int_{0}^1} \dfrac{\sigma'(\Gamma)}{\Gamma}| \partial_x \Gamma|^2   \text{ d$x$} \,.
\end{array}
\end{equation}
Consequently, we infer that, regardless the qualitative properties of $\sigma,$ $(h,\Gamma)$ should satisfy
\begin{equation} \label{weak.1}
h \in L_\infty(0,T \ ; \  H^1(0,1)), \qquad  \Gamma \in L_\infty(0,T;L_1(0,1))
\end{equation}
together with
\begin{equation} \label{weak.2}
h^{3/2} \partial^3_{x}h \in L_2((0,T) \times (0,1)),    \qquad
h^{1/2} \partial_x \sigma(\Gamma) \in L_2((0,T) \times (0,1)).
\end{equation}
Since $D>0$, the energy estimate \eqref{a18} provides an additional estimate which depends strongly on the properties of $\sigma$, namely $\sqrt{-\sigma'(\Gamma)/\Gamma}\, \partial_x\Gamma\in L_2((0,T)\times (0,1))$. We will actually prove  that, under assumption \eqref{case.2} on $\sigma,$ this additional estimate guarantees that the solutions we construct satisfy the further regularity 
\begin{equation} \label{weak.3}
\Gamma \in L_{2}((0,T) \times (0,1)) \;\;\;\mbox{ and }\;\;\; \sigma(\Gamma)\in L_{4/3}(0,T;W^1_{4/3}(0,1)),
\end{equation}
see {\bf Lemma~\ref{lem.emb}}. Let us point out here that \eqref{case.2} implies that $\sigma'$ does not decay too fast towards $-\infty$ at infinity.  Hence, assuming \eqref{weak.1} and \eqref{weak.2} to hold true, we realize that $J_f[h,\Gamma],J_s[h,\Gamma] \in L_2((0,T) \times (0,1))$. Since an alternative formulation of \eqref{a1}--\eqref{a2} reads,
\begin{eqnarray}
\partial_t h + \partial_x \left( h^{3/2}\, J_f[h,\Gamma] \right) & = & 0 \quad \mbox{ in } \quad (0,\infty)\times (0,1)\,, \label{aaf1}\\
\partial_t \Gamma + \partial_x \left( h^{1/2}\ \Gamma\ J_s[h,\Gamma] \right) & = & D\ \partial_x^2 \Gamma \quad \mbox{ in }\quad (0,\infty)\times (0,1)\,, \label{aaf2}
\end{eqnarray}
we infer from \eqref{weak.1}--\eqref{weak.3} and the embedding of $H^1(0,1)$ in $L_\infty(0,1)$ that $h^{3/2}\ J_f[h,\Gamma]$ and $h^{1/2}\ \Gamma\ J_s[h,\Gamma]$ both belong to $L_1((0,T) \times (0,1))$ and we can give a meaning to \eqref{a1}--\eqref{a2} at least in the following weak sense:

\begin{definition}\label{def:ws}
Let $T>0$ and $\sigma$ be a surface tension such that either
\begin{itemize}
\item $\sigma \in \mathcal{C}^1(0,\infty) \cap \mathcal{C}([0,\infty)) $, $\sigma(1)=0$, and \eqref{case.2}
holds true,
\end{itemize}
or
\begin{itemize}
\item $\sigma \in \mathcal{C}^{1,1}(\mathbb R)$ is such that $\sigma(1)=0$ and $g_{\sigma}$, defined in \eqref{a8}, satisfies \emph{(A4)--(A6)}.
\end{itemize} 
Then, given { an initial condition $(h_0,\Gamma_0) \in H^1(0,1) \times L_2(0,1)$ with $h_0\ge 0$ and $\Gamma_0\ge 0$} we say that $(h,\Gamma)$ is a \emph{weak solution} in $(0,T)$  to \eqref{a1}--\eqref{a4} with surface tension $\sigma$
and initial condition $(h_0,\Gamma_0)$, if 
\begin{itemize}
\item { $h\ge 0$ and $\Gamma\ge 0$} satisfy
\begin{equation} \label{aregsolution}
\begin{array}{ll}
h \in L_\infty(0,T;H^1(0,1)) \cap \mathcal{C}([0,T]\times [0,1]), &  \Gamma \in L_\infty(0,T;L_1(0,1)) \cap L_2((0,T) \times (0,1)),\\[8pt]
\partial_x^3 h \in L_2(\mathcal{P}_h(\delta)) \;\;\;\mbox{ for all }\;\;\; \delta>0,    & \sigma(\Gamma) \in L_1(0,T;W_1^1(0,1)) ,\\[8pt]
h^{3/2} \partial^3_{x}h \in L_2(\mathcal{P}_h),    & h^{1/2} \partial_x \sigma(\Gamma) \in L_2((0,T)\times (0,1)),
\end{array}
\end{equation}
where $\mathcal{P}_{h}(\delta) := \{ (t,x)\in (0,T)\times (0,1)\ :\ h(t,x)>\delta \}$ for $\delta>0$ and $\mathcal{P}_{h} := \{ (t,x)\in (0,T)\times (0,1)\ :\ h(t,x)>0 \}.$
\item  for any $\zeta \in {\mathcal{C}}^{\infty}( [0,T] \times [0,1])$ such that 
$\zeta(T,x)= 0$ for all $x \in [0,1]$ and $\partial_x \zeta(t,x) = 0$ for all $(t,x) \in [0,T] \times \{0,1\},$
there holds: 
\begin{equation}
\label{a14}
\displaystyle{\int_{0}^T \int_{0}^{1}} \left( h \ \partial_t \zeta +  \left[\frac{1}{3}\ h^3\, \mathbf{1}_{(0,\infty)}(h)\, \partial_x^3 h + \frac{1}{2}\ h^2\ \partial_x \sigma(\Gamma)  \right]\ \partial_x \zeta \right) \text{d$x$d$s$}  = -\int_{0}^1 h_0(x) \zeta(0,x) \text{d$x$}\,, 
\end{equation}
and
\begin{equation}
\label{a15}
\displaystyle{\int_{0}^T \int_{0}^{1}} \left( \Gamma \ \partial_t \zeta +  \left[ \frac{1}{2}\ h^2\, \mathbf{1}_{(0,\infty)}(h)\,  \Gamma\ \partial_x^3 h + h\ \Gamma\ \partial_x \sigma(\Gamma) \right]\ \partial_x \zeta + D \Gamma \partial_x^2 \zeta \right) \text{d$x$d$s$} = -\int_{0}^1 \Gamma_0(x) \zeta(0,x)\text{d$x$}\,. 
\end{equation} 
\end{itemize} 
\end{definition}

With these conventions, our main result reads:

\begin{theorem}\label{P.1} %
Let the surface tension $\sigma \in \mathcal{C}^1(0,\infty) \cap \mathcal{C}([0,\infty)) $ satisfy $\sigma(1)=0$ and \eqref{case.2}. Then, given { an initial condition $(h_0,\Gamma_0) \in H^1(0,1) \times L_2(0,1)$ with $h_0\ge 0$, $\Gamma_0\ge 0$ and any $T>0$}, there exists at least one weak solution $(h,\Gamma)$ in $(0,T)$ to \eqref{a1}--\eqref{a4}
with surface tension $\sigma$ and initial condition $(h_0,\Gamma_0)$ in the sense of Definition~\ref{def:ws}.   
\end{theorem}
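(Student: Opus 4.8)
\textbf{Proof strategy for Theorem~\ref{P.1}.}
The plan is to reduce the problem with a general surface tension $\sigma$ satisfying (H1)--(H2) to the already-treated case of surface tensions satisfying (A4)--(A6), via an approximation argument. The construction proceeds in three main stages.

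\emph{Step 1: Construction of approximating surface tensions.} Starting from $\sigma$ obeying (H1)--(H2), I would build a sequence $(\sigma_k)_{k\ge 1}$ with $\sigma_k\in\mathcal{C}^{1,1}(\mathbb R)$, $\sigma_k(1)=0$, such that each $g_{\sigma_k}$ (defined via \eqref{a8}) satisfies (A4), (A5) with a uniform constant $c_g$, and (A6) with a uniform exponent $r\in(0,2)$ and constant $C_g$, while $\sigma_k\to\sigma$ locally uniformly on $[0,\infty)$ together with $\sigma_k'\to\sigma'$ locally uniformly on $(0,\infty)$. The idea is: on $[0,\infty)$, modify $\sigma$ so that it is flattened near $s=0$ (replacing the strict inequality $\sigma'(s)<0$ there by something vanishing to ensure $\sigma_k'(0)=0$, as required for convexity of $g_{\sigma_k}$ on all of $\mathbb R$), add a small quadratically-decaying correction $-\varepsilon_k s$ near the origin and reflect suitably for $s<0$; for $s\ge 1$ one keeps $\sigma_k\approx\sigma$ but truncates so that $\sigma_k'(s)\ge -c_g s$ holds, which forces $g_{\sigma_k}''\ge c_g$. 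The delicate point is that (H2) only gives $\sigma'(s)\sim -s^{-\theta}$, so $-\sigma'(s)/s\sim s^{-1-\theta}\to 0$: one must add a controlled convex perturbation to restore (A5) without destroying the uniform upper bound (A6) and without spoiling convergence.

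\emph{Step 2: Uniform estimates for the approximate solutions.} For each $k$, Section~\ref{sec:ens} provides a nonnegative weak solution $(h_k,\Gamma_k)$ on $(0,T)$ to \eqref{a1}--\eqref{a4} with surface tension $\sigma_k$ and initial data $(h_0,\Gamma_0)$ (approximating $\Gamma_0$ if needed, then passing to the limit). From the energy identity \eqref{a18} written for $\sigma_k$, using (A5) with the uniform constant $c_g$ to absorb the $D\int \sigma_k'(\Gamma_k)\Gamma_k^{-1}|\partial_x\Gamma_k|^2$ term, and conservation of mass \eqref{aconserver}, one obtains bounds that are \emph{uniform in $k$}: $h_k$ bounded in $L_\infty(0,T;H^1(0,1))$ (hence in $L_\infty((0,T)\times(0,1))$), $h_k^{3/2}\partial_x^3 h_k$ and $h_k^{1/2}\partial_x\sigma_k(\Gamma_k)$ bounded in $L_2$, $g_{\sigma_k}(\Gamma_k)$ bounded in $L_\infty(0,T;L_1)$, and $\sqrt{-\sigma_k'(\Gamma_k)/\Gamma_k}\,\partial_x\Gamma_k$ bounded in $L_2$. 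Combining these with the structure \eqref{case.2} as in Lemma~\ref{lem.emb} gives the uniform bounds $\Gamma_k\in L_2((0,T)\times(0,1))$ and $\sigma_k(\Gamma_k)\in L_{4/3}(0,T;W_{4/3}^1(0,1))$. Time equicontinuity of $h_k$ in a negative Sobolev norm follows from \eqref{aaf1} and the bounds on $h_k^{3/2}J_f[h_k,\Gamma_k]$; similarly for $\Gamma_k$ from \eqref{aaf2}.

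\emph{Step 3: Compactness and passage to the limit.} Using Aubin--Lions-type arguments with the above equiboundedness and time-equicontinuity, extract a subsequence with $h_k\to h$ in $\mathcal{C}([0,T]\times[0,1])$ (so $h\ge 0$ and the degeneracy sets behave well), $h_k\rightharpoonup h$ weakly-$*$ in $L_\infty(0,T;H^1)$, $\Gamma_k\rightharpoonup\Gamma$ weakly in $L_2((0,T)\times(0,1))$ with $\Gamma\ge 0$, $\partial_x^3 h_k\rightharpoonup\partial_x^3 h$ weakly in $L_2$ on sets $\{h>\delta\}$, and $\sigma_k(\Gamma_k)\rightharpoonup\sigma(\Gamma)$ in $L_{4/3}(0,T;W_{4/3}^1)$ — the last identification using the uniform convergence $\sigma_k\to\sigma$, strong $L_p$ convergence of $\Gamma_k$ (on $\{h>\delta\}$, or via a truncation/equi-integrability argument) and continuity of $\sigma$. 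One then passes to the limit in the weak formulations \eqref{a14}--\eqref{a15} for $(h_k,\Gamma_k,\sigma_k)$; the nonlinear flux terms $h_k^3\mathbf 1_{(0,\infty)}(h_k)\partial_x^3 h_k$, $h_k^2\partial_x\sigma_k(\Gamma_k)$, $h_k^2\mathbf 1_{(0,\infty)}(h_k)\Gamma_k\partial_x^3 h_k$ and $h_k\Gamma_k\partial_x\sigma_k(\Gamma_k)$ are handled by writing them through $J_f,J_s$ as in \eqref{aaf1}--\eqref{aaf2}, splitting the domain into $\{h>\delta\}$ and $\{h\le\delta\}$, using weak-strong convergence on the former and the uniform $L_1$ bound times a vanishing factor $h_k$ or $\sqrt{h_k}\le\sqrt\delta$ on the latter, then letting $\delta\to 0$. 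Verifying the regularity class \eqref{aregsolution} for the limit is then a matter of lower semicontinuity of norms under weak convergence.

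\emph{Main obstacle.} The genuinely hard part is Step~1 together with the limit identification of $\sigma_k(\Gamma_k)$ and its gradient in Step~3: because (H2) permits $\sigma'$ to decay like $-s^{-\theta}$, the convexity constant of $g_\sigma$ degenerates at infinity, so one must insert a convex perturbation into $\sigma_k$ that simultaneously (i) restores the uniform lower bound (A5), (ii) respects the uniform growth bound (A6), (iii) vanishes in the limit $k\to\infty$ pointwise so that $\sigma_k\to\sigma$, and (iv) does not wreck the uniform-in-$k$ estimate extracted from \eqref{a18} — in particular one needs the extra diffusive information to still yield $\Gamma\in L_2$ and $\sigma(\Gamma)\in L_{4/3}(0,T;W_{4/3}^1)$ for the limit, not merely for each $\sigma_k$. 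Controlling the interplay between the degeneracy of $h$ (sets $\{h\le\delta\}$) and the possibly-unbounded $\Gamma$ in the flux terms is the other place where care is required.
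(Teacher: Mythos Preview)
Your overall three-step strategy matches the paper's, but Step~1 contains a genuine obstruction: you cannot have (A5) hold with a \emph{uniform} constant $c_g>0$ and simultaneously have $\sigma_k\to\sigma$ locally uniformly on $[0,\infty)$. Indeed, (A5) gives $-\sigma_k'(s)/s\ge c_g$ for all $s>0$, hence $\sigma_k'(s)\le -c_g s$ and $\sigma_k(s)\le -c_g(s^2-1)/2$. If this held uniformly in $k$, the limit $\sigma$ would inherit the same quadratic decay, contradicting the bound $|\sigma'|\le\sigma_0$ from (H2). Your own ``Main obstacle'' paragraph flags this tension but does not resolve it: requirements (i) and (iii) there are mutually exclusive.

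The paper's resolution is to \emph{drop} the uniformity in (A4)--(A6). The approximants $\sigma_k$ constructed in Section~\ref{sec:cast} satisfy (A4)--(A6) only with $k$-dependent constants; that is enough to invoke Theorem~\ref{P.2} and obtain $(h_k,\Gamma_k)$. The uniform-in-$k$ information comes from a different pair of inequalities, tailored to (H2): one shows (Proposition~\ref{pro.tec2}) that $|\sigma_k'|\le 2+\sigma_0$, that $g_k(s)\le C_1(1+s^2)$ (so that $\int g_k(\Gamma_0)\,\mathrm{d}x$ is bounded uniformly by $\|\Gamma_0\|_2$), and crucially that
\[
-\sigma_k'(s)\ \ge\ \frac{C_2\,ks}{(1+s)^\theta(1+ks)}\,,\qquad s\ge 0,
\]
which yields $-\sigma_k'(s)/s\ge c/(1+s)^{1+\theta}$ uniformly in $k$. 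It is \emph{this} lower bound, not (A5), that turns the dissipation term in \eqref{a2021} into a uniform $L_2$ control on $\partial_x\Gamma_k/(1+\Gamma_k)^{(1+\theta)/2}$. Lemma~\ref{lem.emb} then upgrades this to a uniform bound for $\Gamma_k$ in $L_1\bigl(0,T;\mathcal C^{0,(1-\theta)/2}([0,1])\bigr)$; together with a bound on $\partial_t\Gamma_k$ in $L_1(0,T;(H_N^2(0,1))^*)$ and \cite[Corollary~4]{Si87}, this produces \emph{strong} convergence $\Gamma_k\to\Gamma$ in $L_1(0,T;\mathcal C([0,1]))$ and hence in $L_2((0,T)\times(0,1))$. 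That global strong convergence (not the conditional ``on $\{h>\delta\}$'' version you sketch) is what makes the identifications $\sigma_k(\Gamma_k)\to\sigma(\Gamma)$ in $L_2$ and $\partial_x\sigma_k(\Gamma_k)\rightharpoonup\partial_x\sigma(\Gamma)$ in $L_{4/3}$ straightforward, and with them the passage to the limit in \eqref{a14}--\eqref{a15}.
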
 

As mentioned in the Introduction, we split the proof of  {\bf Theorem~\ref{P.1}} into two parts. First, we focus on {the nonnegativity issue} of solutions to \eqref{a1}--\eqref{a2}. In this respect, we go back to the framework considered in \cite{GW06} and we prove:

\begin{theorem} \label{P.2}
Let the surface tension $\sigma \in \mathcal{C}^2(\mathbb R)$ be such that $\sigma(1)=0$ and the free energy $g_{\sigma}$ defined by \eqref{a8} satisfies \emph{(A4)--(A6)}. Then, given { an initial condition $(h_0,\Gamma_0) \in H^1((0,1)) \times L_2((0,1))$ with $h_0\ge 0$, $\Gamma_0\ge 0$,  and  any $T>0$,}  there exists at least one weak solution $(h,\Gamma)$ in $(0,T)$ to \eqref{a1}--\eqref{a2} with surface tension $\sigma$ and initial condition $(h_0,\Gamma_0)$ in the sense of Definition~\ref{def:ws}. \\
Moreover, the solution satisfies the further regularity 
\begin{equation} \label{reg.Gamma}
\Gamma \in L_\infty(0,T; L_2(0,1)) \cap L_2(0,T;H^1(0,1)) 
\end{equation}
and the energy estimate
\begin{equation} \label{a2021}
\sup_{t\in [0,T]} \left\{\int_{0}^1 \left[ \dfrac{|\partial_x h(t,x)|^2}{2} + g_\sigma(\Gamma(t,x)) \right] \text{d$x$}\right\}
+ \mathcal{D}[h,\Gamma] \leq 
\int_{0}^1 \left( \dfrac{|\partial_x h_0(x)|^2}{2} + g_{\sigma}(\Gamma_0(x)) \right) \text{d$x$}\,,
\end{equation}
where
\begin{eqnarray*}
\mathcal{D}[h,\Gamma] & := & \int_{0}^T\int_{0}^1 \left( \dfrac{(h^3\, \mathbf{1}_{(0,\infty)}({ h(\tau,x)}))}21 |\partial_x^3 h(\tau,x)|^2 + \frac{h(\tau,x)}8 |\partial_x \sigma({ \Gamma(\tau,x)})|^2  \right)\text{d$x$d$\tau$} \\
& & \qquad\qquad - D\ \int_0^T \int_0^1  \dfrac{\sigma'(\Gamma(\tau,x))}{\Gamma(\tau,x)} |\partial_x \Gamma(\tau,x)|^2\, \text{d$x$d$\tau$}.
\end{eqnarray*}
\end{theorem}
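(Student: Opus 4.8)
The strategy is the by-now standard one for degenerate fourth-order parabolic equations (Bernis--Friedman), adapted to the present coupled system exactly as in \cite{GW06}: regularize the degenerate mobilities so as to get a non-degenerate parabolic system, solve it, derive a priori estimates uniform in the regularization parameter, and pass to the limit. The only genuinely new point relative to \cite{GW06} is a structural observation that keeps $\Gamma$ nonnegative. Concretely, for $\varepsilon\in(0,1)$ I would replace $h^3$, $h^2$, $h$ in \eqref{a1}--\eqref{a2} by $a_\varepsilon(h)$, $a_\varepsilon(h)^{2/3}$, $a_\varepsilon(h)^{1/3}$, where $a_\varepsilon\in C^\infty(\mathbb R)$ satisfies $a_\varepsilon\ge\varepsilon$ everywhere and $a_\varepsilon(r)=r^3$ for $r\ge\varepsilon$; the three exponents are chosen precisely so that the cross-terms in the formal computation leading to \eqref{a18} still recombine into complete squares. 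Approximating the data by smooth positive $(h_{0,\varepsilon},\Gamma_{0,\varepsilon})$ converging to $(h_0,\Gamma_0)$ in $H^1\times L_2$ with convergence of the initial energies, one constructs a smooth solution $(h_\varepsilon,\Gamma_\varepsilon)$ on $[0,T]$ by a fixed-point argument (or a Galerkin scheme), solving at each iteration the $\Gamma$-equation as a genuine scalar linear parabolic equation with coefficients built from the current film-height and surfactant iterates. Since $D>0$ and $\sigma'\le0$ on all of $\mathbb R$ by (A5), that equation is strictly parabolic --- its second-order coefficient equals $D-a_\varepsilon(h)^{1/3}\,\Gamma\,\sigma'(\Gamma)\ge D$, regardless of the sign of $\Gamma$ --- so the parabolic maximum principle yields $\Gamma_\varepsilon\ge0$. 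This is the ingredient lost in \cite{GW06} by a change of unknown, and it is stable under the limiting process.

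The three a priori estimates, all uniform in $\varepsilon$, are: (i) conservation of $\int_0^1 h_\varepsilon\,\mathrm{d}x$ and $\int_0^1\Gamma_\varepsilon\,\mathrm{d}x$ from the equations, hence $L_\infty(0,T;L_1)$ bounds; (ii) the energy identity \eqref{a18} for the regularized system, which after time integration controls $h_\varepsilon$ in $L_\infty(0,T;H^1)$ (hence in $L_\infty((0,T)\times(0,1))$), the dissipation terms $a_\varepsilon(h_\varepsilon)^{1/2}\partial_x^3 h_\varepsilon$ and $a_\varepsilon(h_\varepsilon)^{1/6}\partial_x\sigma(\Gamma_\varepsilon)$ in $L_2$, and --- using (A5) through $g_\sigma(s)\ge\tfrac{c_g}{2}(s-1)^2$ and $-D\sigma'(s)/s\ge Dc_g$ --- $\Gamma_\varepsilon$ in $L_\infty(0,T;L_2)\cap L_2(0,T;H^1)$, which is \eqref{reg.Gamma}, with the dissipation providing \eqref{a2021} at the regularized level with equality; (iii) the Bernis--Friedman entropy estimate (as in \cite{GW06}): testing the film equation with $\mathcal G_\varepsilon'(h_\varepsilon)$ for a suitable convex $\mathcal G_\varepsilon$ with $\mathcal G_\varepsilon''=1/a_\varepsilon$, and absorbing the Marangoni cross-term with the help of (ii), gives a uniform bound on $\int_0^1\mathcal G_\varepsilon(h_\varepsilon)\,\mathrm{d}x$ --- forcing $h\ge0$ in the limit --- and on $\partial_x^2 h_\varepsilon$ on regions where $h_\varepsilon$ stays above a fixed threshold, hence $\partial_x^3 h\in L_2(\mathcal P_h(\delta))$. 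Combining these with $H^1(0,1)\hookrightarrow L_\infty(0,1)$ and the one-dimensional interpolation $L_\infty(0,T;L_2)\cap L_2(0,T;H^1)\hookrightarrow L_4((0,T)\times(0,1))$ for $\Gamma_\varepsilon$, one bounds $\partial_t h_\varepsilon$ and $\partial_t\Gamma_\varepsilon$ in a space of negative order, so that the Aubin--Lions lemma applies.

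Along a subsequence, $h_\varepsilon\to h$ strongly in $C([0,T]\times[0,1])$ (so $h\ge0$ and $\mathcal P_h$, $\mathcal P_h(\delta)$ make sense), $\Gamma_\varepsilon\to\Gamma$ strongly in $L_2((0,T)\times(0,1))$ and a.e.\ (so $\Gamma\ge0$) and weakly in $L_2(0,T;H^1)$, together with weak limits for $a_\varepsilon(h_\varepsilon)^{1/2}\partial_x^3 h_\varepsilon$, $\partial_x\sigma(\Gamma_\varepsilon)$ and the flux terms. One then identifies these limits, passes to the limit in the weak formulations \eqref{a14}--\eqref{a15} and in the regularity statements, obtains \eqref{a2021} from weak lower semicontinuity of $\int_0^1(\tfrac12|\partial_x h|^2+g_\sigma(\Gamma))\,\mathrm{d}x$ and of the dissipation (using a.e.\ convergence and the growth bounds (A4)--(A6) together with Vitali's theorem to pass to the limit in the $g_\sigma(\Gamma_\varepsilon)$- and $\sigma(\Gamma_\varepsilon)$-terms), and reads off \eqref{reg.Gamma} from the uniform estimate (ii).

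I expect the main obstacle to be the identification of the limits of the \emph{degenerate} flux terms, above all $\tfrac12 a_\varepsilon(h_\varepsilon)^{2/3}\Gamma_\varepsilon\partial_x^3 h_\varepsilon$ in the surfactant equation (and $\tfrac13 a_\varepsilon(h_\varepsilon)\partial_x^3 h_\varepsilon$ in the film equation). Since only $a_\varepsilon(h_\varepsilon)^{1/2}\partial_x^3 h_\varepsilon$ is controlled in $L_2$, one has good control of $\partial_x^3 h_\varepsilon$ only on $\{h_\varepsilon>\delta\}$, where the strong convergence of $h_\varepsilon$ and the weak convergence let one pass to the limit with the correct factor $\mathbf 1_{(0,\infty)}(h)$; the contribution of $\{h_\varepsilon\le\delta\}$ must be shown to be $O(\delta^{1/2})$ uniformly in $\varepsilon$. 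This is precisely where the chosen powers of the regularized mobilities pay off: writing $a_\varepsilon(h_\varepsilon)^{2/3}\Gamma_\varepsilon\partial_x^3 h_\varepsilon=a_\varepsilon(h_\varepsilon)^{1/6}\Gamma_\varepsilon\cdot a_\varepsilon(h_\varepsilon)^{1/2}\partial_x^3 h_\varepsilon$, estimating on $\{h_\varepsilon\le\delta\}$ the first factor by $\delta^{1/2}|\Gamma_\varepsilon|$ (with $\Gamma_\varepsilon\in L_4$) and the second in $L_2$, the product is $O(\delta^{1/2})$ in $L_{4/3}$; letting $\delta\to0$ at the very end closes the argument. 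A secondary, more routine, technical point is to verify that the cross-terms indeed recombine into complete squares at the regularized level, so that (A5) can be exploited to obtain \eqref{reg.Gamma}.
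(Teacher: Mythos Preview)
Your outline is essentially sound, but the route to nonnegativity of $\Gamma$ is genuinely different from the paper's, and a couple of points need correction.

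The paper does not rely on a maximum principle. Instead, it keeps the Galerkin construction of \cite{GW06} but inserts two extra truncations into the regularized system: $\sigma'$ is replaced by a bounded $\sigma_k'=\mathcal T_k\circ\sigma'$, and the factor $\Gamma$ multiplying $\partial_x^3 h$ in the surfactant flux is replaced by a bounded Lipschitz $\tau_k(\Gamma)$. The sole purpose of these truncations is to force the surfactant flux $a_2(h)\tau_k(\Gamma)\partial_x^3 h+a_1(h)\Gamma\sigma_k'(\Gamma)\partial_x\Gamma$ into $L_2((0,T)\times(0,1))$, which upgrades $\partial_t\Gamma_k$ from $L_{3/2}(0,T;(W_3^1)^*)$ to $L_2(0,T;(H^1)^*)$ and thereby makes the pairing $\langle\partial_t\Gamma_k,\beta'(\Gamma_k)\rangle$ legitimate for Lipschitz $\beta'$; nonnegativity then follows by a Stampacchia-type argument with $\beta_\varepsilon$ approximating the negative part. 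Your approach bypasses this by working with smooth solutions of the regularized system and exploiting the divergence structure $\partial_t\Gamma+\partial_x(V\Gamma)=D\partial_x^2\Gamma$ directly. Both routes work; the paper's stays entirely within the weak (Galerkin) framework, whereas yours requires an honest construction of classical solutions to the regularized problem --- and in particular \emph{cannot} use Galerkin, since a finite-dimensional projection does not preserve the comparison principle. Your parenthetical ``(or a Galerkin scheme)'' should therefore be dropped.

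Two small corrections. First, (A5) does \emph{not} give $\sigma'\le 0$ on all of $\mathbb{R}$: since $g_\sigma''(s)=-\sigma'(s)/s\ge c_g>0$, one has $\sigma'(s)>0$ for $s<0$ (and indeed the paper remarks that (A5) forces $\sigma'(0)=0$). What you actually need, and what does follow from (A5), is $s\,\sigma'(s)=-s^2 g_\sigma''(s)\le 0$ for all $s\in\mathbb{R}$; your lower bound on the second-order coefficient survives with this corrected justification. Second, for the nonnegativity step it is cleaner to argue via the divergence form $\partial_t\Gamma+\partial_x(V\Gamma)=D\partial_x^2\Gamma$ (with $V$ smooth at the regularized level) and test with $-\Gamma_-$, rather than via the non-divergence expansion you describe: in the latter the zeroth-order coefficient picks up a $\partial_x^4 h$ term of indeterminate sign, whereas in the former nonnegativity is immediate.
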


With the regularity \eqref{reg.Gamma}, any weak solution constructed in \cite{GW06}  is a weak solution in our sense (see the proof of {\bf Theorem~\ref{P.2}} for further details).  The major novelty in this result is that we obtain nonnegativity of the surfactant concentration $\Gamma.$ For the proof of {\bf Theorem~\ref{P.1}}, we consider a surface tension $\sigma \in \mathcal{C}([0,\infty)) \cap \mathcal{C}^1(0,\infty)$ and introduce a family of approximate surface tensions $(\sigma_k)_{k\in\mathbb N}$ satisfying the assumptions of {\bf Theorem~\ref{P.2}}. We achieve our result by studying the compactness properties of the family of associated weak solutions. A fundamental argument will be that, { owing to assumption \eqref{case.2} and equation \eqref{a18}}, the dissipation of energy is measured by
\begin{equation} \label{eq.diss}
\int_0^T \int_0^1 \dfrac{|\partial_x \Gamma|^2 }{\Gamma (1+\Gamma)^{\theta}}\, \text{d$x$d$\tau$}\,.
\end{equation}
When $\theta \in [0,1),$ this quantity enables us to control $\sqrt{\Gamma}$ in some H\"older space (see {\bf Lemma~\ref{lem.emb}}). This, in turn, yields compactness on the concentration for any bounded family of solutions in $L_2((0,T)\times (0,1))$.


%
%
%
\section{Existence of nonnegative solutions for decaying surface tensions}\label{sec:ens}
%
%
%
In this section, we assume $\sigma \in \mathcal{C}^2(\mathbb R)$ is such that $\sigma(1)=0$ and the free energy $g_{\sigma},$ as defined in \eqref{a8}, satisfies (A4)--(A6) and we construct nonnegative weak solutions to \eqref{a1}--\eqref{a2}. In \cite[Sect.3.4]{GW06}, the authors remark that, for proving nonnegativity of the surfactant concentration of weak solutions to \eqref{a1}--\eqref{a4}, a difficulty arises when multiplying equation \eqref{a2}  by  $\Gamma_- = -\min{\{0,\Gamma\}}.$ Indeed, under assumptions (A4)--(A6), the very low regularity of $\Gamma$ implies only that $\partial_t \Gamma \in L_{3/2}(0,T;(W_3^1(0,1))^*)$ and $\Gamma_-\in L_2(0,T;H^1(0,1))$. This regularity does not allow to define the duality bracket $\langle \partial_t \Gamma, \Gamma_- \rangle$. To construct weak solutions with nonnegative surfactant concentrations, we go back to the strategy applied in \cite{GW06}: construction of solutions to a regularized problem via a Galerkin method, followed by a compactness argument when the regularization parameter goes to $0.$ We introduce a supplementary truncation operator in the regularized problem in order to guarantee that the solutions to the regularized problems have nonnegative surfactant concentrations. 

{Throughout this section,} we fix a nonnegative initial condition $(h_0,\Gamma_0) \in H^1(0,1) \times L_2(0,1).$
We also introduce a  Lipschitz continuous truncation function $\mathcal{T}$ such that
\begin{equation} \label{def.T}
\mathcal{T}(s) =
\left\{
\begin{array}{ll}
s & \text{if $s \in (0,1)$},\\
2-s & \text{if $s \in [1,2]$},\\
0 & \text{if $s\geq 2$},\\
\end{array}
\right.
\qquad \mathcal{T}(-s) = -\mathcal{T}(s) \ \text{if $s<0$},
\end{equation}
and put $\mathcal{T}_k := k\mathcal{T}(\cdot/k)$ for $k \ge 1$. Then, we set
\begin{equation} \label{b1}
\sigma_k(s) := \int_{1}^{s} \mathcal{T}_k(\sigma'(r)) \text{d$r$} \quad \mbox{  for } \, s \in \mathbb R.
\end{equation}
We emphasize that this construction ensures that $\sigma_k \in \mathcal{C}^{1,1}(\mathbb R)$ has bounded first and second derivatives. Associated to this truncation of $\sigma,$ we introduce a truncation of the identity
\begin{equation} \label{b2}
\tau_k(s) := s \ \dfrac{ \sigma_k'(s)}{\sigma'(s)} \quad \mbox{ for } \, s \in \mathbb{R}.
\end{equation}
We note that the construction above is well-defined because 
\begin{equation}\label{spirou}
0 \geq \sigma_k'(s)  \geq \sigma'(s)\quad\mbox{ for all }\quad s \in \mathbb R.
\end{equation} 
With these conventions, our regularized problem reads
\begin{eqnarray}
\label{b3}
\partial_t h + \partial_x \left(\left[a_3(h) + {1}/{k}\right]\ \partial_x^3 h + a_2(h)\ \partial_x \sigma_k(\Gamma) \right) & =& 0\,, \qquad \ \quad (t,x)\in (0,\infty)\times (0,1)\,, \\[4pt]
\label{b4}
 \partial_t \Gamma + \partial_x \left( a_2(h)\ \tau_k(\Gamma)\ \partial_x^3 h + a_1(h)\ \Gamma\ \partial_x \sigma_k(\Gamma) \right) & =& D\ \partial_x^2 \Gamma\,, \quad (t,x)\in (0,\infty)\times (0,1)\,, 
\end{eqnarray}
subject to \eqref{a3}--\eqref{a4}, where $k$ is a positive integer. The { notation $a_i(h)$ stands} for $(\max{\{0,h\}})^i/i$ for $i=1,2,3.$ { This is the same convention} as in \cite{GW06} so that conditions (A1)--(A8) therein are satisfied. 

%
\subsection{Existence for \eqref{b3}-\eqref{b4}}\label{sec:ctae}
%

To begin with, we fix $k \ge 1$ and prove:

\begin{lemma} \label{L1}
Consider { an initial condition $(h_0,\Gamma_0) \in H^1((0,1)) \times L_2((0,1))$ with $h_0\ge 0$ and $\Gamma_0\ge 0.$} For any $k \ge 1$ and $T>0,$ there exists at least a couple of functions $(h,\Gamma)$ having the regularity %
\begin{equation} \label{b3bis}
h \in L_\infty(0,T ; H^1(0,1)) \cap L_2(0,T;H^3(0,1)) \ ,  \quad
\Gamma \in L_\infty(0,T;L_2(0,1)) \cap L_2(0,T;H^1(0,1)),
\end{equation}
\begin{equation} \label{b3ter}
\partial_t h \in L_2(0,T;(H^1(0,1))^*) \ , \quad \partial_t \Gamma \in L_{3/2}(0,T;(W_3^1(0,1))^*),
\end{equation}
and satisfying,
\begin{equation} \label{b5}
\int_0^T \langle \partial_t h, \zeta\rangle\, \text{d$s$} - \int_{0}^T \! \int_{0}^1 \left(a_2(h) \partial_x \sigma_k(\Gamma) + \left[a_3(h) + {1}/{k}\right]\ \partial_x^3 h\right) \partial_x \zeta\, \text{d$x$d$s$} = 0,
\end{equation}
for all $\zeta \in L_2(0,T;H^1(0,1))$, together with
\begin{equation} \label{b6}
\int_0^T \langle \partial_t \Gamma, \zeta\rangle\, \text{d$s$} - \int_{0}^T \! \int_{0}^1  \left(a_1(h)\ \Gamma\ \partial_x \sigma_k(\Gamma)+ a_2(h)\ \tau_k(\Gamma)\ \partial_x^3 h -D\, \partial_x\Gamma \right) \partial_x \zeta\, \text{d$x$d$s$} =0,
\end{equation}
for all $\zeta \in L_3(0,T;W_3^1(0,1))$ and
\begin{equation} \label{b7}
(h(0,\cdot), \Gamma(0,\cdot)) = (h_0,\Gamma_0) \,,
\end{equation}
the latter being meaningful as $h\in \mathcal{C}([0,T];(H^{1}(0,1))^*)$ and $\Gamma\in \mathcal{C}([0,T];(W_3^1(0,1))^*)$ by \eqref{b3bis} and \eqref{b3ter}.\\
Moreover, there holds the energy inequality
\begin{equation} \label{b5bis}
\sup_{t\in [0,T]} \left\{ \int_{0}^1 \left[ \dfrac{|\partial_x h(t,x)\vert^2}{2} + g_{\sigma}(\Gamma(t,x)) \right]\,\text{d$x$}\right\} + \tilde{\mathcal{D}_k[h,\Gamma]} \leq
\int_{0}^1 \left[ \dfrac{|\partial_x h_0(x)|^2}{2} + g_{\sigma}({ \Gamma_0(x)}) \right] \text{d$x$} ,
\end{equation}
where
$$
\tilde{\mathcal{D}}_k[h,\Gamma] := \int_0^T\int_{0}^1 \left\{ \left[ \frac{1}{k}  + \frac{a_3(h)}{7} \right] |\partial_x^3 h|^2 - D \dfrac{\sigma'(\Gamma)}{\Gamma} |\partial_x \Gamma|^2 + \frac{a_1(h)}{8}  |\partial_x \sigma_k(\Gamma)|^2 \right\}\, \text{d$x$d$s$}. 
$$
\end{lemma}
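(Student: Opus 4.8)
\textbf{Proof strategy for Lemma~\ref{L1}.} The plan is to follow the classical Galerkin scheme used in \cite{GW06} for the height equation, but with the additional truncations $\sigma_k$, $\tau_k$ built into the nonlinearities, and to pass to the limit in the Galerkin dimension. First I would fix $k\ge 1$ and an orthonormal basis $(e_j)_{j\ge 1}$ of $L_2(0,1)$ consisting of eigenfunctions of $-\partial_x^2$ with homogeneous Neumann boundary conditions, which also forms an orthogonal basis of $H^1(0,1)$ (and, by elliptic regularity, lies in $H^3(0,1)$ with $\partial_x e_j = \partial_x^3 e_j = 0$ at $\{0,1\}$). Writing $h_n(t) = \sum_{j=1}^n \alpha_j(t)\, e_j$ and $\Gamma_n(t) = \sum_{j=1}^n \beta_j(t)\, e_j$, I would project equations \eqref{b3}--\eqref{b4} onto $\mathrm{span}\{e_1,\dots,e_n\}$, obtaining a system of ODEs for $(\alpha_j,\beta_j)$ with locally Lipschitz right-hand sides (here the boundedness and Lipschitz continuity of $\sigma_k'$, $\sigma_k''$, $\tau_k$, and the Lipschitz truncations $a_i(\cdot)$ are exactly what makes the nonlinearities well-behaved). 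The Cauchy--Lipschitz theorem gives a local solution on some maximal interval $[0,T_n)$.

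Next I would derive the $k$-regularized energy identity at the Galerkin level: testing the $h_n$-equation with $-\partial_x^2 h_n$ and the $\Gamma_n$-equation with $g_\sigma''(\Gamma_n)\Gamma_n$-type quantities (more precisely, reproducing the computation that led to \eqref{a18} but with $a_3(h)+1/k$, $a_2(h)$, $a_1(h)$, $\sigma_k$, $\tau_k$ in place of $h^3/3$, $h^2/2$, $h$, $\sigma$, and with the identity $\sigma_k' = \sigma' \cdot \tau_k(s)/s$ chosen precisely so the cross terms combine into squares) yields a differential inequality whose integrated form is exactly \eqref{b5bis} with the dissipation $\tilde{\mathcal D}_k$. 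Since (A5) forces $g_\sigma$ to be coercive, $g_\sigma(s)\ge \tfrac{c_g}{2}(s-1)^2 \ge c_1 s^2 - c_2$, this controls $\|h_n(t)\|_{H^1}$ and $\|\Gamma_n(t)\|_{L_2}$ uniformly in $n$ and $t$, so $T_n=\infty$ and, on $[0,T]$, $(h_n)$ is bounded in $L_\infty(0,T;H^1)$, $(\Gamma_n)$ in $L_\infty(0,T;L_2)$, $(h_n)$ in $L_2(0,T;H^3)$ (from the $\tfrac1k|\partial_x^3 h_n|^2$ term, using $\|h_n\|_{H^3}^2 \lesssim \|\partial_x^3 h_n\|_{L_2}^2 + \|h_n\|_{H^1}^2$), and $(\Gamma_n)$ in $L_2(0,T;H^1)$ (from $-D\,\sigma'(\Gamma_n)/\Gamma_n\,|\partial_x\Gamma_n|^2 \ge D c_g |\partial_x\Gamma_n|^2$ by (A5)). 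Then I would bound the time derivatives: from \eqref{b3} and the $L_\infty(H^1)\hookrightarrow L_\infty(L_\infty)$ bound on $h_n$, plus the $L_2(H^3)$ bound, $\partial_t h_n$ is bounded in $L_2(0,T;(H^1)^*)$; from \eqref{b4}, estimating $a_2(h_n)\tau_k(\Gamma_n)\partial_x^3 h_n$ in $L_2(0,T;L_2)$ — here boundedness of $\tau_k$ is crucial — $a_1(h_n)\Gamma_n\partial_x\sigma_k(\Gamma_n)$ in $L_{3/2}$ via Hölder ($\Gamma_n\in L_\infty(L_2)\cap L_2(H^1)\hookrightarrow L_{10/3}((0,T)\times(0,1))$ in 1D) and $D\partial_x\Gamma_n$ in $L_2$, one gets $\partial_t\Gamma_n$ bounded in $L_{3/2}(0,T;(W_3^1)^*)$.

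With these uniform bounds in hand, I would extract a subsequence with $h_n \rightharpoonup h$ weakly-$*$ in $L_\infty(0,T;H^1)$ and weakly in $L_2(0,T;H^3)$, $\Gamma_n\rightharpoonup\Gamma$ weakly-$*$ in $L_\infty(0,T;L_2)$ and weakly in $L_2(0,T;H^1)$, and — by the Aubin--Lions--Simon lemma applied with $H^1\hookrightarrow\hookrightarrow C([0,1])\hookrightarrow (H^1)^*$ and $H^1\hookrightarrow\hookrightarrow L_2\hookrightarrow (W_3^1)^*$ — strong convergence $h_n\to h$ in $C([0,T]\times[0,1])$ and $\Gamma_n\to\Gamma$ in $L_2((0,T)\times(0,1))$, hence a.e. after a further subsequence. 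The a.e.\ convergence, continuity of $a_i$, $\sigma_k'$, $\tau_k$, and the $L_2$-weak convergence of $\partial_x\Gamma_n$ and $\partial_x^3 h_n$ let me identify all nonlinear products: $a_2(h_n)\partial_x\sigma_k(\Gamma_n) = a_2(h_n)\sigma_k'(\Gamma_n)\partial_x\Gamma_n \rightharpoonup a_2(h)\sigma_k'(\Gamma)\partial_x\Gamma$ in $L_1$ (product of an a.e.-convergent uniformly bounded factor with a weakly-$L_2$-convergent factor), $[a_3(h_n)+1/k]\partial_x^3 h_n\rightharpoonup [a_3(h)+1/k]\partial_x^3 h$ in $L_2$, $a_2(h_n)\tau_k(\Gamma_n)\partial_x^3 h_n\rightharpoonup a_2(h)\tau_k(\Gamma)\partial_x^3 h$ in $L_2$, and $a_1(h_n)\Gamma_n\partial_x\sigma_k(\Gamma_n)\rightharpoonup a_1(h)\Gamma\sigma_k'(\Gamma)\partial_x\Gamma$ in $L_1$. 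Passing to the limit in the Galerkin weak formulation against $\zeta\in\mathrm{span}\{e_1,\dots,e_m\}\otimes C^1([0,T])$ and using density gives \eqref{b5}--\eqref{b6}; the initial condition \eqref{b7} follows from the $C([0,T];(H^1)^*)$ resp.\ $C([0,T];(W_3^1)^*)$ regularity and weak convergence at $t=0$. Finally, \eqref{b5bis} is obtained by passing to the limit in the Galerkin energy inequality, using weak lower semicontinuity of the (convex, nonnegative) dissipation terms and the strong convergences to handle $g_\sigma(\Gamma_n(t))\to g_\sigma(\Gamma(t))$ for a.e.\ $t$ (via a.e.\ convergence and a uniform-integrability / Fatou argument), together with the convergence of the initial data.

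\textbf{Main obstacle.} The delicate point is the Galerkin energy estimate itself: one must verify that, with the truncated coefficients $\sigma_k$, $a_i$ and the truncated identity $\tau_k$, the cross terms arising from testing \eqref{b3} with $-\partial_x^2 h_n$ and \eqref{b4} with (a Galerkin-admissible approximation of) $g_\sigma''(\Gamma_n)\Gamma_n$ still recombine into manifestly nonnegative squares so that \eqref{b5bis} holds with a useful dissipation $\tilde{\mathcal D}_k$ — this is exactly where the definition $\tau_k(s) = s\,\sigma_k'(s)/\sigma'(s)$ in \eqref{b2} and the sign relation \eqref{spirou} are used, and a secondary subtlety is that $g_\sigma''(\Gamma_n)\Gamma_n$ is not in the Galerkin space, so one tests with its $L_2$-projection and controls the resulting commutator, or equivalently works directly with $-\partial_x(g_\sigma''(\Gamma_n)\partial_x\Gamma_n)$-type test functions as in \cite{GW06}. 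Once the energy estimate is secured, everything else is the standard compactness machinery.
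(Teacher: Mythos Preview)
Your overall architecture---Galerkin approximation, energy estimate, uniform bounds, Aubin--Lions compactness, passage to the limit---matches the paper, but you miss the one device that makes the energy step clean. The paper (following \cite{GW06}) does \emph{not} do Galerkin in $(h,\Gamma)$; it first rewrites the system in the variables $(h,v)$ with $v:=g_\sigma'(\Gamma)$ and $\Gamma=W(v)$, $W:=(g_\sigma')^{-1}$ (well-defined by (A5)), obtaining \eqref{b3af}--\eqref{b4af}. The point is that the correct test function for the energy identity on the second equation is $g_\sigma'(\Gamma)=v$ itself, which in the $(h,v)$-Galerkin scheme lies tautologically in the finite-dimensional space. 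This sidesteps entirely the obstacle you flag as ``main'': there is no nonlinear test function to project, no commutator to control. Your suggestion to test with (the projection of) ``$g_\sigma''(\Gamma_n)\Gamma_n$'' is also a slip---that quantity equals $-\sigma'(\Gamma_n)$, whereas what produces $\partial_t g_\sigma(\Gamma_n)$ is $g_\sigma'(\Gamma_n)$; and your fallback ``as in \cite{GW06}'' is misplaced, since \cite{GW06} uses precisely the $v$-formulation rather than projection-plus-commutator.

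A second, smaller discrepancy: in the truncated system the cross terms do \emph{not} reassemble into the $J_f,J_s$ squares of \eqref{a18}. After testing, the two cross contributions collapse (via $\tau_k(s)\sigma'(s)/s=\sigma_k'(s)$) to $I=-\int 2a_2(h)\,\partial_x\sigma_k(\Gamma)\,\partial_x^3 h$, and the paper then uses $(\sigma_k')^2\le\sigma'\sigma_k'$ together with Young's inequality $|2a_2\,\partial_x\sigma_k(\Gamma)\,\partial_x^3 h|\le \tfrac{7}{8}a_1|\partial_x\sigma_k(\Gamma)|^2+\tfrac{6}{7}a_3|\partial_x^3 h|^2$ to absorb $I$; this is where the specific constants $a_3(h)/7$ and $a_1(h)/8$ in $\tilde{\mathcal D}_k$ come from. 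The rest of your outline (global existence from coercivity of $g_\sigma$ via (A5), the $L_2(0,T;H^3)$ bound from the $1/k$ term, time-derivative bounds, and the compactness/limit identification) is correct and essentially identical to the paper's.
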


\begin{remark}\label{re:approx}%
Note that, in \eqref{b5bis}, $\sigma_k$ only appears in the last term of $\tilde{\mathcal{D}}_k[h,\Gamma]$.
\end{remark}

\begin{proof}
We follow here the Galerkin method from \cite[Section~3]{GW06}. The system \eqref{b3}--\eqref{b4} is actually almost identical to the regularized system used in \cite[Section~3]{GW06} except that the truncation function $\tau_k$ is replaced by the identity there. Since $\tau_k$ is a bounded and Lipschitz continuous function, the analysis performed in \cite[Section~3]{GW06} carries over to \eqref{b3}--\eqref{b4} with only slight changes, the main one arising in the derivation of the energy inequality. We will thus only give a sketch of the proof and refer to \cite[Section~3]{GW06} for details.\\
The first step is an alternative formulation of \eqref{b3}--\eqref{b4} in therms of $h$ and the new unknown function $v:=g_\sigma'(\Gamma)$, the latter being well-defined thanks to the convexity (A5) of $g_\sigma$. Denoting the inverse function of $g_\sigma'$ by $W$, we have
\begin{eqnarray}
\label{b3af}
\partial_t h + \partial_x \left(\left[a_3(h) + {1}/{k}\right]\ \partial_x^3 h - a_2(h)\tau_k(W(v))\partial_x v \right) & =& 0\,,  \\[4pt]
\label{b4af}
 \partial_t W(v) + \partial_x \left( a_2(h)\ \tau_k(W(v))\ \partial_x^3 h - a_1(h)\ W(v) \tau_k(W(v)) \partial_x v  \right) & =& D\ \partial_x^2 W(v)\,, 
\end{eqnarray}
in $(0,\infty)\times (0,1).$ As already mentioned, \eqref{b3af}--\eqref{b4af} is the same as the system studied in \cite[Section~3]{GW06} except for the terms involving { the bounded and Lipschitz continuous function $\tau_k$}. Not surprisingly, considering the same Galerkin approximation to \eqref{b3af}--\eqref{b4af} as in \cite[Section~3]{GW06}, one can prove the local existence of solutions to the Galerkin approximations exactly in the same way as in \cite[Section~3.1]{GW06}. To obtain the global existence, we argue as in \cite[Section~3.2]{GW06} by deriving an energy estimate for the Galerkin approximations. Since there is a slight modification { necessary}, let us sketch the proof for \eqref{b3af}--\eqref{b4af}, the argument being the same at the level of the Galerkin approximations. We multiply \eqref{b3af} by $-\partial_x^2 h$, \eqref{b4af} by $v=g_\sigma'(\Gamma)$, integrate over $(0,1)$, and add the resulting identities to obtain
$$
\dfrac{\textrm{d}}{\textrm{d$t$}} \int_{0}^1 \left[ \dfrac{|\partial_x h|^2}{2} + g_{\sigma}(\Gamma) \right]\, \text{d$x$} + \int_{0}^1 \left\{ \left[ \frac{1}{k}  + a_3(h)\right] |\partial_x^3 h|^2 - D \, \dfrac{\sigma'(\Gamma)}{\Gamma} |\partial_x \Gamma|^2 + a_1(h) { (\sigma'(\Gamma) \sigma'_k(\Gamma))}|\partial_x \Gamma|^2 \right\}\, \text{d$x$} = I,
$$
where (see \eqref{a8} and \eqref{b2})
$$
I := - \int_{0}^1 a_2(h) \  \partial_x \Gamma \ \partial_x^3h \left( \sigma'_k(\Gamma) + \sigma'(\Gamma) \dfrac{\tau_k(\Gamma)}{\Gamma}\right)\, \text{d$x$} = -\int_{0}^1 2a_2(h)   \ \partial_x \sigma_k(\Gamma) \ \partial_x^3h \, \text{d$x$} . 
$$
Since $\sigma_k'\le 0$, it follows from \eqref{spirou} that $(\sigma_k')^2\le \sigma' \sigma_k'$ while Young's inequality ensures that 
$$
|2a_2(h) \ \partial_x \sigma_k(\Gamma) \ \partial_x^3h |  \leq \frac{7 a_1(h)}{8}\ |\partial_x \sigma_k(\Gamma)|^2 + \frac{6 a_3(h)}{7}\ |\partial_x^3h|^2\,,
$$

so that we finally obtain
\begin{multline*}
\dfrac{\textrm{d}}{\textrm{d$t$}} \int_{0}^1 \left[ \dfrac{|\partial_x h|^2}{2} + g_{\sigma}(\Gamma) \right]\, \text{d$x$}
+ \int_{0}^1 \left\{ \left[ \frac{1}{k}  + a_3(h)\right] |\partial_x^3 h|^2 - D \dfrac{\sigma'(\Gamma)}{\Gamma} |\partial_x \Gamma|^2 + a_1(h) |\partial_x \sigma_k(\Gamma)|^2 \right\}\, \text{d$x$}\\
\leq \int_0^1 \left[ \frac{7 a_1(h)}{8}\ |\partial_x \sigma_k(\Gamma)|^2 + \frac{6 a_3(h)}{7}\ |\partial_x^3h|^2 \right]\, \text{d$x$} \,.
\end{multline*}
This yields
$$
\dfrac{\textrm{d}}{\textrm{d$t$}} \int_{0}^1 \left[ \dfrac{|\partial_x h|^2}{2} + g_{\sigma}(\Gamma) \right]\, \text{d$x$}
+ \int_{0}^1 \left\{ \left[ \frac{1}{k}  + \frac{a_3(h)}{7} \right] |\partial_x^3 h|^2 - D \dfrac{\sigma'(\Gamma)}{\Gamma} |\partial_x \Gamma|^2 + \frac{a_1(h)}{8}\  |\partial_x \sigma_k(\Gamma)|^2 \right\}\, \text{d$x$} \leq 0\,,
$$
whence \eqref{b5bis} after time integration.\\
The convergence of the Galerkin approximations to a solution to \eqref{b3}--\eqref{b4} satisfying the properties listed in {\bf Lemma~\ref{L1}} is then carried out as in \cite[Section~3.3]{GW06} to which we refer.
\end{proof}

At this point, we show that the idea to introduce truncation functions  $\tau_k$ and $\sigma_k$
yields the nonnegativity of $\Gamma$. This relies on a gain of regularity for $\partial_t \Gamma.$ 

\begin{lemma} \label{L2}
Consider { an initial condition $(h_0,\Gamma_0) \in H^1((0,1)) \times L_2((0,1))$ with $h_0\ge 0$ and $\Gamma_0\ge 0.$} Given $k \ge 1$ and $T>0$, any solution $(h,\Gamma)$ to \eqref{b3bis}-\eqref{b5bis} and \eqref{a3}--\eqref{a4} in the sense of {\bf Lemma~\ref{L1}} satisfies $\partial_t \Gamma \in L_2(0,T;(H^1(0,1))^*)$ and $\Gamma \geq 0$ a.e. in $(0,T) \times (0,1).$
\end{lemma}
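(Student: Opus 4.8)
The plan is to bootstrap the regularity of $\partial_t\Gamma$ so that the duality pairing $\langle\partial_t\Gamma,\Gamma_-\rangle$ makes sense, and then run the standard test-function argument against $\Gamma_-$. For the first part, I revisit the weak formulation \eqref{b6} and inspect each flux term. With $k$ fixed, the truncation $\tau_k$ is bounded and $\sigma_k$ has bounded first derivative; combined with the regularity \eqref{b3bis}, namely $h\in L_\infty(0,T;H^1)\hookrightarrow L_\infty((0,T)\times(0,1))$, $\partial_x^3 h\in L_2$ and $\partial_x\sigma_k(\Gamma)=\sigma_k'(\Gamma)\partial_x\Gamma\in L_2$ (since $\Gamma\in L_2(0,T;H^1)$), one sees that $a_2(h)\tau_k(\Gamma)\partial_x^3 h$, $a_1(h)\Gamma\partial_x\sigma_k(\Gamma)$ and $D\partial_x\Gamma$ all lie in $L_2((0,T)\times(0,1))$ — the improvement over \eqref{b3ter} comes precisely from the boundedness of $\tau_k$, which tames the term that previously forced the $W_3^1$ dual. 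Hence the distributional $x$-derivative of a flux in $L_2$ shows $\partial_t\Gamma\in L_2(0,T;(H^1(0,1))^*)$, and by \eqref{b3bis} and a standard interpolation/Lions–Magenes argument, $\Gamma\in\mathcal{C}([0,T];L_2(0,1))$ and the chain rule $t\mapsto\int_0^1 G(\Gamma(t,x))\,\mathrm{d}x$ is admissible for Lipschitz $G$ with $G(0)=0$ applied through the pairing $\langle\partial_t\Gamma,G'(\Gamma)\rangle$.

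Next, I take $G(s):=\tfrac12|s_-|^2$ with $G'(s)=-s_-=-\max\{0,-s\}\in L_\infty(0,T;L_2)\cap L_2(0,T;H^1)$, which is now an admissible test function in \eqref{b6} after the density argument upgrading \eqref{b6} to test functions in $L_2(0,T;H^1(0,1))$. This produces, for a.e. $t$,
\begin{equation*}
\frac12\int_0^1|\Gamma_-(t,x)|^2\,\mathrm{d}x - \frac12\int_0^1|\Gamma_{0,-}(x)|^2\,\mathrm{d}x = \int_0^t\!\!\int_0^1\Big(a_1(h)\,\Gamma\,\partial_x\sigma_k(\Gamma) + a_2(h)\,\tau_k(\Gamma)\,\partial_x^3 h - D\,\partial_x\Gamma\Big)\,\partial_x\Gamma_-\,\mathrm{d}x\,\mathrm{d}s.
\end{equation*}
Since $\Gamma_{0,-}=0$ by hypothesis, the left side is $\tfrac12\|\Gamma_-(t)\|_{L_2}^2$. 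On the right, one uses $\partial_x\Gamma_- = -\mathbf{1}_{\{\Gamma<0\}}\partial_x\Gamma$ and the key algebraic facts, valid on $\{\Gamma<0\}$: $\Gamma\,\partial_x\sigma_k(\Gamma)\,\partial_x\Gamma_- = -\Gamma\,\sigma_k'(\Gamma)\,|\partial_x\Gamma|^2\mathbf{1}_{\{\Gamma<0\}}$, which is $\le 0$ because $\Gamma<0$ and $\sigma_k'\le 0$ give $\Gamma\sigma_k'(\Gamma)\ge0$; and $-D\,\partial_x\Gamma\,\partial_x\Gamma_- = D|\partial_x\Gamma_-|^2\ge 0$, which is the good sign to absorb the cross term. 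The only term of indefinite sign is $a_2(h)\tau_k(\Gamma)\partial_x^3 h\,\partial_x\Gamma_-$; here I use that $\tau_k$ is an odd truncation of the identity with $|\tau_k(s)|\le C_k$ (in fact one should check $\tau_k(\Gamma)/\Gamma$ stays bounded so that on $\{\Gamma<0\}$ the term behaves like $\Gamma_-$ times something), and estimate it by Young's inequality as $\le \tfrac{D}{2}|\partial_x\Gamma_-|^2 + C_k\,a_3(h)|\partial_x^3 h|^2\,\mathbf{1}_{\{\Gamma<0\}}$ — but that last piece is not obviously integrable against $\Gamma_-^2$. The cleaner route: note that $\tau_k(\Gamma)\mathbf{1}_{\{\Gamma<0\}}$ and $\Gamma_-$ are comparable only near zero, so instead bound $|a_2(h)\tau_k(\Gamma)\partial_x^3 h\,\partial_x\Gamma_-|\le \tfrac{D}{2}|\partial_x\Gamma_-|^2 + C\,|h|^{3}|\tau_k(\Gamma)|^2\mathbf{1}_{\{\Gamma<0\}}|\partial_x^3 h|^2/ D$ and observe $|\tau_k(\Gamma)|^2\mathbf{1}_{\{\Gamma<0\}}\le C\,\Gamma_-^2\wedge C_k^2$; using $h^3|\partial_x^3 h|^2\in L_1$ together with $\Gamma_-\in L_\infty(0,T;L_2)$ and a Gronwall argument in the quantity $\|\Gamma_-(t)\|_{L_2}^2$ closes the estimate.

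Putting these together, after absorbing the $D$-terms, one arrives at an inequality of the form
\begin{equation*}
\|\Gamma_-(t)\|_{L_2(0,1)}^2 \le C\int_0^t \Psi(s)\,\|\Gamma_-(s)\|_{L_\infty(0,1)}^? \,\mathrm{d}s
\end{equation*}
— more precisely, a bound controlled by $\int_0^t c(s)\|\Gamma_-(s)\|_{L_2}^2\,\mathrm{d}s$ with $c\in L_1(0,T)$ coming from the $h^3|\partial_x^3h|^2$ factor — whence $\Gamma_-\equiv 0$ by Gronwall's lemma, i.e. $\Gamma\ge 0$ a.e. I expect the main obstacle to be exactly the control of the cross term $a_2(h)\tau_k(\Gamma)\partial_x^3 h\,\partial_x\Gamma_-$: one must exploit that $\tau_k$ vanishes at $0$ with the right rate so that $\tau_k(\Gamma)\mathbf{1}_{\{\Gamma<0\}}$ is dominated by $\Gamma_-$ (not merely bounded by $C_k$), which is what ultimately allows a Gronwall closure rather than an uncontrolled estimate; verifying $|\tau_k(s)|\le |s|$ for all $s$ from the definitions \eqref{b2}, \eqref{def.T} and \eqref{spirou} is the linchpin. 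A secondary technical point is justifying the chain rule and the use of $\Gamma_-$ as a test function, which requires the gain $\partial_t\Gamma\in L_2(0,T;(H^1)^*)$ established in the first paragraph together with a regularization (e.g. replacing $s_-$ by a smooth convex approximation and passing to the limit).
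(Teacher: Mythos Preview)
Your first paragraph—upgrading $\partial_t\Gamma$ from $L_{3/2}(0,T;(W_3^1)^*)$ to $L_2(0,T;(H^1)^*)$ by exploiting that $\tau_k$ and $\sigma_k'$ are bounded—is correct and matches the paper's argument exactly.

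The nonnegativity argument, however, has a genuine gap in the treatment of the cross term $a_2(h)\,\tau_k(\Gamma)\,\partial_x^3 h\,\partial_x\Gamma_-$. After your Young split you are left with a term of the form $C\int_0^1 \Gamma_-^2\,|\partial_x^3 h|^2\,\mathrm{d}x$ (using $|\tau_k(\Gamma)|\le\Gamma_-$ on $\{\Gamma<0\}$, which is indeed correct). To feed this into a Gronwall inequality for $\|\Gamma_-(t)\|_{L_2}^2$ you would need a pointwise-in-time bound $\int_0^1 \Gamma_-^2|\partial_x^3 h|^2\,\mathrm{d}x \le c(t)\|\Gamma_-(t)\|_{L_2}^2$ with $c\in L_1(0,T)$. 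But $\partial_x^3 h(t,\cdot)$ is only in $L_2(0,1)$, not $L_\infty(0,1)$, so you must pass through $\|\Gamma_-\|_{L_\infty}$; by Agmon's inequality in one dimension this brings in $\|\partial_x\Gamma_-\|_{L_2}$, and after a further Young inequality the residual coefficient is $\|\partial_x^3 h(t)\|_{L_2}^4$. Since the energy estimate \eqref{b5bis} only gives $\|\partial_x^3 h\|_{L_2}\in L_2(0,T)$ (not $L_4(0,T)$), the Gronwall loop does not close. Your sketch ``using $h^3|\partial_x^3 h|^2\in L_1$ together with $\Gamma_-\in L_\infty(0,T;L_2)$'' glosses over precisely this mismatch. (A side remark: your sign claim ``$\sigma_k'\le 0$'' is false for negative arguments—under (A5) one has $\sigma'(s)>0$ for $s<0$, hence $\sigma_k'(s)\ge 0$ there—though the Marangoni term still ends up with the good sign.)

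The paper avoids Gronwall altogether by testing with $\beta_\varepsilon'(\Gamma)$, where $\beta_\varepsilon$ is a smooth convex approximation of $s\mapsto s_-$ whose second derivative is supported in $(-\varepsilon,0)$. The crucial observation is that $|\tau_k(s)\beta_\varepsilon''(s)|\le |s|\beta_\varepsilon''(s)\le C$ \emph{uniformly in $\varepsilon$}, so the entire right-hand side is an integral over $\{-\varepsilon<\Gamma<0\}$ of an $L_1$ function containing the factor $\partial_x\Gamma$. Since $\partial_x\Gamma=0$ a.e.\ on $\{\Gamma=0\}$, dominated convergence sends the right-hand side to zero as $\varepsilon\to 0$, giving $\|\Gamma_-(t)\|_{L_1}=0$ directly. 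This is the missing idea: rather than trying to absorb the cross term, one localises it to a shrinking set where it vanishes in the limit.
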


\begin{proof}
Owing to \eqref{b3bis}, the embedding of $H^1(0,1)$ in $L_\infty(0,1)$, and the compactness of the supports of $\sigma_k'$ and $\tau_k$ (which follows from (A5) and the properties of $\mathcal{T}$), there holds
$$
a_1(h)\ \Gamma\ \partial_x \sigma_k(\Gamma)+ a_2(h)\ \tau_k(\Gamma)\ \partial_x^3 h = 
 a_1(h) \Gamma \sigma'_k(\Gamma) \partial_x \Gamma + a_2(h) \ \tau_k(\Gamma) \ \partial_x^3 h 
 \in L_2((0,T) \times  (0,1)),
$$
and $D \partial_x \Gamma\in L_2((0,T) \times  (0,1))$.  As a consequence \eqref{b6} also holds true for all $\zeta\in L_2(0,T;H^1(0,1))$ and $\partial_t \Gamma \in L_2(0,T;(H^1(0,1))^*)$. Then, if $\beta \in \mathcal{C}^2(\mathbb{R})$ is such that $\beta'$ is Lipschitz continuous, we have $\beta'(\Gamma)\in L_2(0,T;H^1(0,1))$ and 
$$
\dfrac{d}{dt} \int_0^1 \beta(\Gamma)\, \text{d$x$} = \langle \partial_t \Gamma , \beta'(\Gamma) \rangle\,.
$$
Assuming furthermore that $\beta$ is convex, { i.e. $\beta'' \geq 0$}, there holds, for any $t\in (0,T)$,
$$
\int_{0}^1 \beta(\Gamma(t))\, \text{d$x$} \leq \int_{0}^1 \beta(\Gamma_0)\, \text{d$x$} + \int_0^T \int_{0}^1 \left| \left(a_1(h)\ \Gamma\ \partial_x \sigma_k(\Gamma)+ a_2(h)\ \tau_k(\Gamma)\ \partial_x^3 h\right) \beta''(\Gamma) \partial_x \Gamma  \right|\, \text{d$x$d$s$}. 
$$
To finish off the proof, we apply this inequality to a family of functions approximating the negative part of $\Gamma.$ Namely, we fix a nonnegative $\chi \in \mathcal{C}^{\infty}_0(\mathbb R)$ such that $\chi \not\equiv 0$ has support in $(-1,0)$ and define 
$\beta_1$ by
$$
\beta_1(0) = 0, \quad \beta_1'(s) := - \dfrac{\int_s^{\infty} \chi(\alpha) \text{d$\alpha$}}{\int_{-{\infty}}^{\infty} \chi(\alpha) \text{d$\alpha$}} \quad\mbox{ for }\quad s\in\mathbb{R}\,.
$$  
We then set $\beta_{\varepsilon}(s) := \varepsilon\beta_1(s/\varepsilon)$ for $s\in\mathbb{R}$ and $\varepsilon >0.$ Taking $\beta=\beta_\varepsilon$ for $\varepsilon >0$ in the above inequality,  there holds, for each $t\in (0,T)$,
$$
\int_{0}^1 \beta_{\varepsilon}(\Gamma(t))\, \text{d$x$}  \leq \int_0^T \int_{0}^1 \left| \left(a_1(h)\ \Gamma\ \partial_x \sigma_k(\Gamma)+ a_2(h)\ \tau_k(\Gamma)\ \partial_x^3 h\right) \beta_{\varepsilon}''(\Gamma) \partial_x \Gamma  \right|\, \text{d$x$d$s$} \,,
$$
{ since} $\beta_{\varepsilon}(\Gamma_0) = 0$ due to $\Gamma_0\ge 0$. Observing that $|\tau_k(s) \beta_\varepsilon''(s)| \le | s \beta_\varepsilon''(s)| \le C(\chi)$ for $s\in\mathbb{R}$, we have 
\begin{multline*}
 \int_0^T \!\! \int_{0}^1 \left| \left(a_1(h)\ \Gamma\ \partial_x \sigma_k(\Gamma)+ a_2(h)\ \tau_k(\Gamma)\ \partial_x^3 h\right) \beta_{\varepsilon}''(\Gamma) \partial_x \Gamma  \right| \, \text{d$x$d$s$} 
\\
\leq C (\chi) \int_{\{|\Gamma| < \varepsilon\}}  \left[ \left| a_1(h)\  \partial_x \sigma_k(\Gamma) \partial_x \Gamma\right| + \left|  a_2(h)\  \partial_x^3 h \  \partial_x \Gamma\right|\right]\, \text{d$x$d$s$}  \phantom{12345678901}\\
\leq C (\chi) \left[  \|a_1(h) \sigma'_k(\Gamma)\|_{L_\infty(0,T; L_\infty}(0,1) ) + \|a_2(h) \|_{L_\infty(0,T; L_\infty}(0,1) )\right] \int_{\{|\Gamma| < \varepsilon\}}  \left[ \left| \partial_x \Gamma\right|^2 + \left|  \partial_x^3 h \  \partial_x \Gamma\right|\right]\, \text{d$x$d$s$} .
\end{multline*}
As $\partial_x \Gamma$ and $\partial_x^3h$ both belong to $L_2((0,T)\times (0,1))$ and $\partial_x\Gamma=0$ a.e. in $\{\Gamma=0\}$ by \cite[Lemma~A.4]{KS00}, we obtain in the limit $\varepsilon \to 0$
$$
\int_{0}^1 \max{\{-\Gamma(t,x),0\}}\, \text{d$x$} \leq 0 ,\quad \mbox{ for }\quad t \in (0,T).
$$ 
This completes the proof.
\end{proof}
 
%
\subsection{Proof of {\bf Theorem~\ref{P.2}}} \label{sec:cvgce}
%

Let $\sigma$ be as in the statement of {\bf Theorem~\ref{P.2}} and consider { an initial condition $(h_0,\Gamma_0) \in H^1((0,1)) \times L_2((0,1))$ with $h_0\ge 0$, $\Gamma_0\ge 0$} and $T>0.$ First, applying  {\bf Lemma~\ref{L1}} and {\bf Lemma~\ref{L2}}, we obtain a sequence $(h_k,\Gamma_k)_{k\ge 1}$ of solutions to \eqref{b3}--\eqref{b4}, \eqref{a3}--\eqref{a4} for which $\partial_t\Gamma_k \in L_2(0,T;(H^1(0,1))^*)$ and $\Gamma_k \geq 0$ a.e. in $(0,T)\times (0,1).$ In particular, for each $k\ge 1,$ the time regularity of $h_k$ and $\Gamma_k,$ together with the initial conditions \eqref{b7} $(h_k(0,\cdot),\Gamma_k(0,\cdot)) = (h_0,\Gamma_0),$  yield the integration by parts formula:
$$
\int_0^T \langle \partial_t h_k , \zeta \rangle\, \text{d$t$} = - \int_0^1 h_0(x) \zeta(0,x) \text{d$x$} - \int_0^T \int_0^1 h_k(s,x)\partial_t \zeta(s,x) \text{d$x$d$t$}\,,
$$
$$
\int_0^T \langle \partial_t \Gamma_k , \zeta \rangle\, \text{d$t$} = - \int_0^1 \Gamma_0(x) \zeta(0,x) \text{d$x$} - \int_0^T \int_0^1 \Gamma_k(s,x)\partial_t \zeta(s,x) \text{d$x$d$t$}\,
$$
for any test function $\zeta \in \mathcal{C}^{\infty}([0,T] \times [0,1])$ such that $\zeta(T,x)= 0$ for all $x \in [0,1]$ and $\partial_x \zeta(t,x) = 0$ for all $(t,x) \in [0,T] \times \{0,1\}$. Hence, taking such a test function $\zeta$ in \eqref{b5}-\eqref{b6} we obtain :
\begin{equation} \label{b5prime}
\displaystyle{\int_{0}^T \int_{0}^{1}} \left( h_k \ \partial_t \zeta +  \left[\left( a_3(h_k) + \frac{1}{k}\right) \ \partial_x^3 h_k + a_2(h_k)\ \partial_x \sigma_k(\Gamma_k)  \right]\ \partial_x \zeta \right)\, \text{d$x$d$t$}   = -\int_{0}^1 h_0(x) \zeta(0,x) \text{d$x$}\,, 
\end{equation}
\begin{equation} \label{b6prime}
\displaystyle{\int_{0}^T \int_{0}^{1}} \left( \Gamma_k \ \partial_t \zeta +  \left[ a_2(h_k)\ \tau_k(\Gamma_k)\ \partial_x^3 h_k + a_1(h_k)\ \Gamma_k\ \partial_x \sigma_k(\Gamma_k) \right] \partial_x \zeta + D \Gamma_k \partial_x^2 \zeta \right)\, \text{d$x$d$t$}  = -\int_{0}^1 \Gamma_0(x) \zeta(0,x)\text{d$x$}\,. 
\end{equation}
So, the proof reduces to find a weak cluster point $(h,\Gamma)$ of the sequence $((h_k,\Gamma_k))_{k\ge 1}$ that has the regularity \eqref{aregsolution} and for which we can pass to the limit in the two previous equations.

First, we note that the conservation laws \eqref{aconserver} are also satisfied by $(h_k,\Gamma_k).$ Consequently, due to \eqref{b5bis} and the Poincar\'e inequality, we have uniform bounds for
\begin{itemize}
\item $(h_k)_{k\ge 1}$ in $L_\infty(0,T;H^1(0,1))$ and $(g_\sigma(\Gamma_k))_{k\ge 1}$ in $L_\infty(0,T;  L_1(0,1)),$ 
\item $(\sqrt{a_3(h_k)} \partial_x^3h_k )_{k\ge 1}$,   $(\sqrt{a_1(h_k)} \partial_x \sigma_k(\Gamma_k))_{k\ge 1},$ and
$(\sqrt{-\sigma'(\Gamma_k)/\Gamma_k} \partial_x \Gamma_k)_{k\ge 1}$ in $L_2((0,T) \times (0,1)).$
\end{itemize}
Owing to the bound (A5) from below on $\sigma',$ this yields a uniform bound on $(\Gamma_k)_{k\ge 1}$ in  $L_\infty(0,T;L_2(0,1))$ and $L_2(0,T;H^1(0,1))$, and the sequence of fluxes, given by
\begin{eqnarray*}
J_s^k & := & \frac{a_2(h_k)}{a_1(h_k)^{1/2}}\ \frac{\tau_k(\Gamma_k)}{\Gamma_k}\ \partial_x^3 h_k + a_1(h_k)^{1/2}\ \partial_x \sigma_k(\Gamma_k), \\ 
J_f^k & := & \left( \frac{a_3(h_k)}{3} + \frac{1}{3k} \right)^{1/2}\ \partial_x^3 h_k + a_2(h_k)\ \left( 3 a_3(h_k) + \frac{3}{k} \right)^{-1/2}\ \partial_x \sigma_k(\Gamma_k), 
\end{eqnarray*}
are also bounded in $L_2((0,T) \times (0,1))$ by \eqref{b5bis}. 
 
Repeating the arguments in \cite[Section~2]{BF90} and \cite[Section~3.4]{GW06}, we may extract a subsequence (not relabeled) and find functions $h$ and $\Gamma$ such that the following convergences hold:
\begin{itemize}
\item$h_k \to h \text{ in $\mathcal{C}([0,T]\times [0,1])$}$ and
         $\Gamma_{k} \to \Gamma \text{ in $L_2(0,T;L_p(0,1))$ for all $p \in [1,\infty)$}$,
\item $\sqrt{3a_3(h_{k}) + 3/k }\, \partial_x^3 h_k \rightharpoonup H$ in $L_2((0,T) \times (0,1))$ with $H=\sqrt{3a_3(h)}\, \partial_x^3 h$ a.e. in $\{h\ne 0\}$,
\item $\partial_x \Gamma_k \rightharpoonup \partial_x \Gamma \text{ in $L_2((0,T) \times (0,1))$}.$
\end{itemize}
 Arguing as in the proof of \cite[Equation~(3.28)]{GW06}, the previous convergences imply that
  { 
 $$(3a_3(h_{k}) + 3/k )\, \partial_x^3 h_k \rightharpoonup h^{3/2} \mathbf{1}_{(0,\infty)}(h) \, \partial_x^3 h\quad\text{in}\quad L_2((0,T) \times (0,1))\ .$$
} Next,
interpolating the bounds on $(\Gamma_k)_{k\ge 1}$ with the help of \cite[Proposition~I.3.2]{DiB93}, we deduce that $(\Gamma_k)_{k\ge 1}$ is bounded in $L_6((0,T)\times (0,1))$ and that the convergence of $(\Gamma_k)_{k\ge 1}$ to $\Gamma$ takes actually place in $L_p((0,T) \times (0,1))$ for all $p\in[2,6)$. Now, since
$$
0 \leq \tau_k(s) \leq s \quad \mbox{ for all }\quad s \geq 0 \quad \mbox{ and }\quad \tau_k(s)=s\quad \mbox{ for }\quad 0\leq s \leq s_k:= \left[ (k/C_{g})^{r/(r+1)}- 1 \right]^{1/r}
$$ 
(see assumption (A6) and \eqref{b2} for the definitions of $r$ and $\tau_k$, respectively), we have, for $p\ge 1$,
\begin{eqnarray*}
\int_0^T \int_0^1 \left| \frac{\tau_k(\Gamma_k)}{\Gamma_k} - 1 \right|^p\, \text{d$x$d$t$}  & = & \int_{\{\Gamma_k> s_k\}} \left|\frac{\tau_k(\Gamma_k)}{\Gamma_k} - 1 \right|^p\, \text{d$x$d$t$} \le 2^p\ \int_{\{\Gamma_k>s_k\}} \text{d$x$d$t$} \\
& \le & \frac{2^p}{s_k^6}\ \int_{\{\Gamma_k>s_k\}}  \Gamma_k^6\, \text{d$x$d$t$} \le \frac{C(p,T)}{s_k^6}\,. 
\end{eqnarray*}
Since $s_k\to \infty$ as $k\to\infty$, we conclude that $\tau_k(\Gamma_k)/\Gamma_k\to 1$ in $L_p((0,T) \times (0,1))$ for any $p\ge 1.$ Similarly, since $\sigma_k'(s)=\sigma'(s)$ for $s\in [0,s_k]$  and $\sigma'\in \mathcal{C}^1(\mathbb{R}),$ it follows from (A6) and \eqref{spirou} that, given $p_0\in [1,6/(r+1))$, $R\ge 1$, and $k\ge 1$ such that $s_k\ge R$, we have 
\begin{eqnarray*}
& & \int_0^T \int_0^1 \left| \sigma'_k(\Gamma_k) - \sigma'(\Gamma) \right|^{p_0}\, \text{d$x$d$t$} \\
& \le & \int_{\{\max{\{\Gamma_k,\Gamma\}}\le R\}} \left| \sigma'(\Gamma_k) - \sigma'(\Gamma) \right|^{p_0}\, \text{d$x$d$t$} + \int_{\{\Gamma_k>R\}\cup \{\Gamma>R\}} \left| \sigma'_k(\Gamma_k) - \sigma'(\Gamma) \right|^{p_0}\, \text{d$x$d$t$} \\
& \le & \|\sigma''\|_{L_\infty(0,R)}\ \int_{\{\max{\{\Gamma_k,\Gamma\}}\le s_k\}} \left| \Gamma_k -\Gamma \right|^{p_0}\, \text{d$x$d$t$} \\
& + & C(p_0,C_g,r)\ \int_{\{\Gamma_k>R\}\cup \{\Gamma>R\}} \left( \Gamma_k^{(r+1)p_0} + \Gamma^{(r+1)p_0} \right)\, \text{d$x$d$t$} \\
& \le & \|\sigma''\|_{L_\infty(0,R)}\ \int_0^T \int_0^1 \left| \Gamma_k -\Gamma \right|^{p_0}\, \text{d$x$d$t$} + \frac{C(p_0,C_g,r)}{R^{6-(r+1)p_0}}\ \int_{\{\Gamma_k>R\}\cup \{\Gamma>R\}} \left( \Gamma_k^6 + \Gamma^6 \right)\, \text{d$x$d$t$}  \\
& \le & \|\sigma''\|_{L_\infty(0,R)}\ \int_0^T \int_0^1 \left| \Gamma_k -\Gamma \right|^{p_0}\, \text{d$x$d$t$} + \frac{C(p_0,C_g,r,T)}{R^{6-(r+1)p_0}}\,.
\end{eqnarray*}
Letting first $k\to\infty$ and then $R\to\infty$ yield that $\sigma'_k(\Gamma_k) \to \sigma'(\Gamma)$ in $L_{p_0}((0,T)\times (0,1))$ for any $p_0 \in [1,6/(r+1)).$ As $r<2$ we note that we may choose $p_0>2$ in the previous convergence which, combined with the weak convergence of $ (\partial_x\Gamma_k)_{k\ge 1}$ in $L_2((0,T)\times (0,1))$ implies that $\partial_x \sigma_k(\Gamma_k) \rightharpoonup \partial_x\sigma(\Gamma)$ in $L_{q_0}((0,T) \times (0,1))$ for some $q_0>1.$ Consequently, $\sqrt{a_1(h_k)}\ \partial_x \sigma_k(\Gamma_k) \rightharpoonup \sqrt{a_1(h)}\partial_x \sigma(\Gamma)$ and 
$$
a_2(h_k)\ \left( 3 a_3(h_k) + \frac{3}{k} \right)^{-1/2}\ \partial_x \sigma_k(\Gamma_k) \rightharpoonup \frac{\sqrt{a_1(h)}}{2}\ \partial_x \sigma(\Gamma)\quad \mbox{ in }\quad L_2((0,T) \times (0,1)).
$$ 
Thus, we conclude that $J_s^k \rightharpoonup H/2 +\sqrt{a_1(h)}\partial_x \sigma(\Gamma)$ and $J_f^k \rightharpoonup H + (\sqrt{a_1(h)}/2)\partial_x \sigma(\Gamma)$ in $L_2((0,T)\times (0,1)).$ Combining these convergences with the convergence of $(h_k)_{k\ge 1}$ to $h$ in $\mathcal{C}([0,T]\times [0,1])$ and that of $(\Gamma_k)_{k\ge 1}$ to $\Gamma$ in $L_2((0,T) \times (0,1))$ allows us to pass to the limit in \eqref{b5prime}--\eqref{b6prime}.

That $h$ is nonnegative can be obtained as in \cite[Section~3.4]{GW06} while the nonnegativity of $\Gamma$ is preserved by the weak limit.  Concerning the energy estimate \eqref{a2021}, we { recall (A5) and} prove as above that
$$
\sqrt{-\sigma'_k(\Gamma_k)/\Gamma_k} \to \sqrt{-\sigma'(\Gamma)/\Gamma}   \text{ in $L_2((0,T) \times (0,1)).$}
$$
Consequently, $(\sqrt{-\sigma'_k(\Gamma_k)/\Gamma_k}\partial_x \Gamma_k)_{k\ge 1}$ converges weakly in $L_1((0,T)\times (0,1))$ to $\sqrt{-\sigma'(\Gamma)/\Gamma}\partial_x \Gamma$, and we can pass to the weak limit in the energy estimate. This completes the proof of {\bf Theorem~\ref{P.2}}. 
%
%
%
\section{Existence of nonnegative solutions for slowly decaying surface tension} \label{sec:stabsigma}
%
%
%
From {\bf Theorem~\ref{P.2}} we obtain existence of weak solutions for a class of surface tension $\sigma$ decreasing at least quadratically to $-\infty$.  We  { now} extend with {\bf Theorem~\ref{P.1}} the existence result to a class containing surface tensions which decrease slowly to $-\infty$ at infinity (but not too { slowly}, see (H2)) and are thus closer to applications. To this end, we fix a surface tension $\sigma$ satisfying (H1)--(H2) and { an initial condition  $(h_0,\Gamma_0) \in H^1(0,1) \times L_2(0,1)$ satisfying $h_0\ge 0$ and $\Gamma_0\ge 0.$} We split the proof of Theorem~\ref{P.1} into three steps: we first construct a sequence $(\sigma_k)_{k\ge 1}$ of surface tensions approximating $\sigma$ and enjoying the properties (A4)-(A6) for { $k\ge 4$} (with constants depending of course on $k$). Owing to this construction, we may apply Theorem~\ref{P.2} to obtain, for each $k\ge { 4}$, a nonnegative weak solution $(h_k,\Gamma_k)$ to \eqref{a1}--\eqref{a4} satisfying \eqref{a2021}. We then show that $(h_k,\Gamma_k)_{k\ge { 4}}$ is compact in suitable function spaces. In the last step, we identify the equations satisfied by the cluster points $(h,\Gamma)$ of $(h_k,\Gamma_k)_{k\ge { 4}}.$ 

\subsection{Construction of approximate surface tensions}\label{sec:cast}

For $k\ge 1,$ we set $\tilde{\sigma}_k(1):=0$ and
\begin{equation} \label{def.skt}
\tilde{\sigma}'_k(s) 
:= 
\left\{
\begin{array}{ccl}
\displaystyle{\left[k \sigma'\left( \frac{1}{k}\right) - k \right] s} & \text{ for } & \displaystyle{s < \frac{1}{k},} \\[8pt]
 \sigma'(s)  & \text{ for } & \displaystyle{\frac{1}{k} \leq s \leq k,} \\[8pt]
\displaystyle{\sigma'(k) - \frac{s}{{k}^{1+{\theta}}}} & \text{ for } & s > k.
\end{array}
\right.
\end{equation}
Recall that $\theta$ is defined in \eqref{case.2}. Denoting  a family of even mollifiers by $(\chi_{\varepsilon})_{\varepsilon >0}$, we introduce then the approximate { surface tension $\sigma_k$ by}
\begin{equation} \label{def.sk}
\sigma'_k : =  \chi_{1/k^2} * \tilde{\sigma}'_k, \qquad \sigma_k(1) = 0.
\end{equation}
The following proposition verifies that we can apply {\bf Theorem~\ref{P.2}} to any approximate surface tension.

\begin{proposition} \label{pro.tec}
Given $k \ge 4,$ the free energy $g_k:=g_{\sigma_k}$ associated to $\sigma_k$ via formula \eqref{a8} satisfies
(A4)--(A6).
\end{proposition}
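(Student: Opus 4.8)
The goal is to check that $g_k = g_{\sigma_k}$, defined by $g_k(1)=g_k'(1)=0$ and $g_k''(s) = -\sigma_k'(s)/s$, satisfies the three conditions (A4), (A5), (A6). The overall strategy is to first establish the relevant pointwise bounds on $\tilde\sigma_k'$ coming directly from the piecewise definition \eqref{def.skt} and from assumptions (H1)--(H2), then transfer these to $\sigma_k' = \chi_{1/k^2}*\tilde\sigma_k'$ using only elementary properties of mollification (it preserves sign, it preserves two-sided bounds of the form $a(s)\le f(s)\le b(s)$ up to harmless shifts in the argument of $a,b$ by $1/k^2$, and it produces a $\mathcal C^\infty$ function), and finally divide by $s$ and integrate to recover $g_k$ and its derivatives.

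\textbf{Step 1: regularity (A4).} Since $\tilde\sigma_k'$ is continuous and piecewise affine (hence locally Lipschitz), the convolution $\sigma_k' = \chi_{1/k^2}*\tilde\sigma_k'$ is $\mathcal C^\infty(\mathbb R)$; in particular $\sigma_k \in \mathcal C^{1,1}(\mathbb R)$ as claimed in the statement preceding Definition~\ref{def:ws}. For (A4) one needs $g_k\in\mathcal C^{2,1}_{loc}(\mathbb R)$, i.e. $g_k'' = -\sigma_k'/s$ is locally Lipschitz; the only possible issue is at $s=0$, so the key point is to show $\sigma_k'(0)=0$ and that $\sigma_k'(s)/s$ extends to a locally Lipschitz (indeed smooth, or at least $\mathcal C^1$) function near $0$. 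Because $k\ge 4$ and the mollification width is $1/k^2$, for $|s|<1/k^2$ we have $s\pm 1/k^2$ inside the region $|\cdot|<1/k$ where $\tilde\sigma_k'$ is the \emph{linear} function $[k\sigma'(1/k)-k]s$; since $\chi_{1/k^2}$ is even, $\sigma_k'$ coincides with this same linear function on $(-1/k^2,1/k^2)$. Hence $\sigma_k'(s)/s$ is constant near $0$ and $g_k''$ is smooth there; elsewhere $s$ is bounded away from $0$ and smoothness of $g_k''$ follows from that of $\sigma_k'$. This gives (A4).

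\textbf{Step 2: lower bound (A5).} We must produce $c_g>0$ with $-\sigma_k'(s)\ge c_g s$ ... more precisely $g_k''(s)=-\sigma_k'(s)/s\ge c_g$ for all $s$. On $(-1/k^2,1/k^2)$, $g_k''(s) = k - k\sigma'(1/k) = k(1-\sigma'(1/k)) \ge k$ since $\sigma'<0$ on $(0,1)$ by \eqref{case.2}; on $\{1/k^2\le|s|\}$, after unwinding the mollification, $\sigma_k'(s)$ is a convex combination of values $\tilde\sigma_k'(s')$ with $|s'-s|\le 1/k^2$, and the three-piece formula \eqref{def.skt} shows $-\tilde\sigma_k'(s')/s' $ is bounded below: on the middle branch by $\min_{1/(2k)\le|\cdot|\le 2k}(-\sigma'(\cdot)/\cdot)>0$ (a positive minimum over a compact set avoiding $0$, using $\sigma'<0$ there); on the outer branch by $1/k^{1+\theta}>0$ directly from the explicit affine term $-s/k^{1+\theta}$; on the inner affine branch again by $k(1-\sigma'(1/k))$. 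Taking $c_g$ to be the minimum of these finitely many positive quantities (all depending on $k$, which is allowed) gives (A5). One must be slightly careful that mollification mixes adjacent branches near $s=1/k$ and $s=k$, but since the two one-sided limits of $\tilde\sigma_k'$ agree there and each branch already satisfies a suitable bound on a slightly enlarged interval, the convex-combination structure of the mollifier preserves the lower bound.

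\textbf{Step 3: upper bound (A6).} Here we need $g_k''(s)=-\sigma_k'(s)/s\le C_g(|s|^r+1)$ for some $r\in(0,2)$. Away from $0$ this reduces to an upper bound on $-\sigma_k'(s)$: on the middle branch $-\sigma_k'(s)\le\sigma_0$ by the upper part of \eqref{case.2} (namely $\sigma'(s)>-\sigma_0$), on the inner branch the linear function is bounded, and on the outer branch $-\tilde\sigma_k'(s) = -\sigma'(k) + s/k^{1+\theta}\le\sigma_0 + s/k^{1+\theta}$, so $-\sigma_k'(s)/s$ grows at most linearly — i.e. one may take any $r\in[1,2)$, in particular $r\in(0,2)$. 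Near $0$, $g_k''$ is bounded (it is constant on $(-1/k^2,1/k^2)$ as in Step 1), so $\le C_g$ there. Combining, $g_k''(s)\le C_g(|s|+1)$ globally with $C_g$ depending on $k$ and on $\sigma_0$, which yields (A6) with $r=1$ (any $r\in(0,2)$ works by enlarging $C_g$).

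\textbf{Main obstacle.} The routine-but-delicate part is bookkeeping around the gluing points $s=\pm1/k,\pm k$ and $s=0$: one must verify that the mollification with width $1/k^2 \ll 1/k$ (valid precisely because $k\ge4$, so $1/k^2$ is small relative to the gap between $1/k$ and $1$, and to $1/k$ itself) does not destroy the one-sided bounds, and that it genuinely produces $\sigma_k'(0)=0$ with $\sigma_k'$ linear near $0$ so that $g_k''$ is well-defined and locally Lipschitz at the origin. Everything else is a direct translation of (H2)/\eqref{case.2} through division by $s$ and integration, with all constants allowed to depend on $k$.
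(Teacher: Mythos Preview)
Your overall strategy matches the paper's: read off pointwise bounds on $\tilde\sigma_k'$ from its piecewise definition \eqref{def.skt}, transfer them to $\sigma_k'$ via mollification, and deduce (A4)--(A6). There is, however, a factual slip that affects several of your justifications: $\tilde\sigma_k'$ is \emph{not} continuous (nor piecewise affine, since the middle piece is $\sigma'$). At $s=1/k$ the left limit of the linear branch is $[k\sigma'(1/k)-k]\cdot(1/k)=\sigma'(1/k)-1$, while the right value is $\sigma'(1/k)$; similarly there is a jump of size $1/k^\theta$ at $s=k$. Thus your Step~1 premise and your Step~2 remark that ``the two one-sided limits of $\tilde\sigma_k'$ agree there'' are both incorrect.

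This does not break the argument. Smoothness of $\sigma_k'$ follows already from mollification of a locally bounded function, so (A4) is unaffected (your key observation that $\sigma_k'$ is exactly linear near $0$ remains valid and is what the paper uses as well). For (A5), what matters is that \emph{both} one-sided values at each jump are strictly negative and bounded away from zero, so the convex-combination structure still yields $\sigma_k'(s)<0$ near the gluing points; the paper handles this by the direct estimate $\sigma_k'(s)\le\tfrac12\sup_{[1/k,k]}\sigma'<0$ for $s\in((k-1)/k^2,\,k+1/k^2)$, without any appeal to continuity. For (A6), the paper in fact shows $g_k''$ is globally \emph{bounded} (the case $r=0$, hence any $r\in(0,2)$ works), which is sharper than your ``at most linear growth'': on the outer branch $-\tilde\sigma_k'(s)/s=-\sigma'(k)/s+1/k^{1+\theta}\to 1/k^{1+\theta}$ as $s\to\infty$, so it is bounded rather than growing.
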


\begin{proof}
By construction, $\sigma'_k \in \mathcal{C}^{\infty}(\mathbb R)$ and, owing to the properties of $\chi_{1/k^2}$, straightforward computations yield 
that 
\begin{eqnarray}
\sigma'_k(s) & = & \left[ k \sigma'\left( \dfrac{1}{k} \right) -k \right]\ s  \quad \mbox{ for all } \, s \leq \dfrac{k-1}{k^2}, \label{fantasio} \\
\sigma'_k(s) & = & \sigma'(k) - \dfrac{s}{k^{1+\theta}} \quad \mbox{ for all } \, s \geq k+\dfrac{1}{k^2}. \label{spip}
\end{eqnarray}
In particular, it follows from \eqref{fantasio} that $[s \mapsto \sigma_k'(s)/s ]\in \mathcal{C}^{\infty}(\mathbb R)$ and $g_k$ satisfies (A4). Next, if $s\in ((k-1)/k^2,k+(1/k^2))$, we have $s-(1/k^2)\ge (k-2)/k^2\ge 1/(2k)$ and it follows from { \eqref{def.skt},} (H1), and (H2) that
\begin{eqnarray}
\sigma_k'(s) & \le & \int_{\mathbb{R}} \left[ k \sigma'\left( \frac{1}{k} \right)\ r\ \mathbf{1}_{(0,1/k)}(r) + \sigma'(r)\ \mathbf{1}_{(1/k,k)}(r) + \sigma'(k)\ \mathbf{1}_{(k,\infty)}(r) \right]\ \chi_{1/k^2}(s-r)\, \text{d$r$} \nonumber\\
& \le & \int_{\mathbb{R}} \left[ \frac{1}{2}\ \sigma'\left( \frac{1}{k} \right)\ \mathbf{1}_{(0,1/k)}(r) + \frac{1}{2}\ \sigma'(r)\ \mathbf{1}_{(1/k,k)}(r) + \frac{1}{2}\ \sigma'(k)\ \mathbf{1}_{(k,\infty)}(r) \right]\ \chi_{1/k^2}(s-r)\, \text{d$r$} \nonumber\\
& \le & \frac{1}{2}\ \sup_{[1/k,k]}{\{\sigma'\}}\ \int_{\mathbb{R}} \chi_{1/k^2}(s-r)\, \text{d$r$} = \frac{1}{2}\ \sup_{[1/k,k]}{\{\sigma'\}} < 0\,. \label{gaston}
\end{eqnarray}
We then infer from \eqref{fantasio}--\eqref{gaston} that
$$
g_k''(s) = -\frac{\sigma_k'(s)}{s} \ge \left\{
\begin{array}{ccl}
k & \text{ if } & s \le \displaystyle{\frac{k-1}{k^2}}\,, \\
\displaystyle{-\frac{1}{4k}\ \sup_{[1/k,k]}{\{\sigma'\}}} & \text{ if } & \displaystyle{\frac{k-1}{k^2} \le s \le k+\frac{1}{k^2}} \,, \\
\displaystyle{\frac{1}{k^{1+\theta}}} & \text{ if } & \displaystyle{k+\frac{1}{k^2} < s}\,,
\end{array}
\right.
$$
and we obtain the existence of a constant $c_k >0$ for which (A5) holds. Finally, it follows from \eqref{case.2} that, for $s\in ((k-1)/k^2,k+(1/k^2))$, 
\begin{eqnarray*}
\sigma_k'(s) & \ge & - \int_{\mathbb{R}} \left[ (1+\sigma_0)\ \mathbf{1}_{(0,1/k)}(r) + \sigma_0\ \mathbf{1}_{(1/k,k)}(r) + \left( \sigma_0 + \frac{r}{k^{1+\theta}} \right)\ \mathbf{1}_{(k,\infty)}(r) \right]\ \chi_{1/k^2}(s-r)\, \text{d$r$} \\
& \ge & - (2+\sigma_0)\ \int_{\mathbb{R}} \chi_{1/k^2}(s-r)\, \text{d$r$} = -(2+\sigma_0)\,.
\end{eqnarray*}
Noting that \eqref{case.2} and \eqref{fantasio}--\eqref{spip} guarantee { this lower bound also} for $s\in [0,(k-1)/k^2)$ and $s\ge k+(1/k^2)$, we conclude that
\begin{equation}
\sigma_k'(s) \ge -(2+\sigma_0) \quad \mbox{ for }\quad s\ge 0\,. \label{volvic}
\end{equation}
In addition, it follows from \eqref{case.2} and \eqref{fantasio} that $\sigma_k'(s)/s \ge -(1+\sigma_0)\ k$ for $s\in (-\infty,(k-1)/k^2]$. These two facts give 
\begin{equation} \label{bound.sigma}
g_k''(s) = -\frac{\sigma'_k(s)}{s} \leq 
\left\{
\begin{array}{ccl}
k (1+\sigma_0)  & \text{ for } & \displaystyle{s < \frac{k-1}{k^2}}, \\[8pt]
\displaystyle{ \frac{k^2 (2+\sigma_0)}{k-1}}  & \text{ for } & \displaystyle{s\ge \frac{k-1}{k^2}},
\end{array}
\right.
\end{equation}
and we obtain (A6) with $r=0$ (so that it also holds true for arbitrary $r \in (0,2)$).
\end{proof}

The previous proposition and {\bf Theorem~\ref{P.2}} ensure that, for any $T>0$ and $k \ge 4,$ there exists at least a  nonnegative weak solution $(h_k,\Gamma_k)$ to \eqref{a1}--\eqref{a4} with surface tension $\sigma_k$ and initial condition $(h_0,\Gamma_0).$ We prepare the study of compactness properties of the sequence $(h_k,\Gamma_k)_{k\ge 4}$ by deriving technical properties of the approximate surface tensions $(\sigma_k)_{k\ge 4}.$

\begin{proposition}\label{pro.tec2}
If $\theta$ is the exponent given by \eqref{case.2}, then there exist constants $C_1,C_2 \in (0,\infty)$ such that, for $k\ge 4$,  
\begin{equation} \label{hyp.s.0}
0 \le g_k(s) \le C_1 \left( 1+s^2 \right) \quad  \mbox{ for all }\quad s\ge 0\,,
\end{equation}
\begin{equation} \label{hyp.s}
- (2+\sigma_0) \le \sigma'_k(s) \leq - C_2\, \dfrac{ ks}{(1+s)^{\theta}\left( 1 + k s\right)} \quad \mbox{ for all }\quad s \ge 0\,.
\end{equation}
Moreover, $(\sigma_k)_{k\ge 4}$ converges uniformly to $\sigma$ on compact subsets of $[0,\infty).$
\end{proposition}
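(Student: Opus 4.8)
The plan is to prove the three assertions separately, using throughout the explicit formulas \eqref{fantasio}, \eqref{spip}, \eqref{gaston} and the bound \eqref{volvic} already obtained in the proof of \textbf{Proposition~\ref{pro.tec}}, together with the elementary fact that $(1+s)^\theta(1+ks)\ge\max\{1,ks\}$, so that the right-hand side of \eqref{hyp.s} never exceeds $C_2\min\{1,ks\}$. For the bounds on $\sigma_k'$, the left inequality in \eqref{hyp.s} is precisely \eqref{volvic}; for the right one I would first record that $\tilde\sigma_k'\le0$ on $[0,\infty)$ (by \eqref{def.skt}, (H1), (H2)), hence $\sigma_k'\le0$ on $[0,\infty)$ and $\sigma_k'(0)=0$ (as $\chi_{1/k^2}$ is even and $\tilde\sigma_k'$ is odd near $0$), and then split $[0,\infty)$ into the three ranges of \eqref{def.sk}:
\begin{itemize}
\item On $[0,(k-1)/k^2]$: \eqref{fantasio} and $-\sigma_0<\sigma'(1/k)<0$ give $-\sigma_k'(s)=k(1-\sigma'(1/k))s\ge ks$, which dominates the target.
\item On $[k+1/k^2,\infty)$: \eqref{spip} and $\sigma'(k)\le-\sigma_1/(1+k^\theta)<0$ give $-\sigma_k'(s)\ge s/k^{1+\theta}=(s/k)^{1+\theta}s^{-\theta}\ge s^{-\theta}$ since $s>k$, and $(1+s)^\theta(1+ks)\ge s^\theta\cdot ks$ shows this dominates the target.
\item On $((k-1)/k^2,k+1/k^2)$: here I would return to $\sigma_k'=\chi_{1/k^2}*\tilde\sigma_k'$ and use that, since $k\ge4$, every argument $s-r$ that occurs satisfies $s-r\ge(k-2)/k^2\ge1/(2k)$. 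For $s\le2$, splitting the convolution at $s-r=1/k$ and using $\tilde\sigma_k'(t)\le\tfrac{1}{2}(\sigma'(1/k)-1)\le-\tfrac{1}{2}$ for $t\in[1/(2k),1/k)$ together with the upper bound on $\sigma'$ from (H2) for $t\ge1/k$, one obtains $\sigma_k'(s)\le-c$ with $c>0$ independent of $k$, which suffices because the target is $\le C_2$. For $2\le s<k+1/k^2$ one has $s-r\ge1$, so $\tilde\sigma_k'(s-r)=\sigma'(s-r)\le-\sigma_1/(1+(s-r)^\theta)\le-\sigma_1/(3s^\theta)$, whence $\sigma_k'(s)\le-\sigma_1/(3s^\theta)$, dominating the target.
\end{itemize}
Choosing $C_2$ to be the smallest constant produced in these estimates gives \eqref{hyp.s}.

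For the bounds on $g_k$, the previous step shows $g_k''(s)=-\sigma_k'(s)/s\ge0$ on $(0,\infty)$, while $g_k''$ is constant on $(0,(k-1)/k^2)$; hence $g_k$ is convex on $(0,\infty)$ and extends to a $\mathcal{C}^2$ function up to $s=0$, and since $g_k(1)=g_k'(1)=0$ this convexity forces $g_k\ge0$ on $[0,\infty)$, the lower bound in \eqref{hyp.s.0}. For the upper bound, \eqref{volvic} gives $0\le-\sigma_k'\le2+\sigma_0$, so $0\le g_k'(s)=\int_1^s(-\sigma_k'(\rho)/\rho)\,d\rho\le(2+\sigma_0)\ln s$ for $s\ge1$ and, integrating again, $g_k(s)\le(2+\sigma_0)(s\ln s-s+1)\le(2+\sigma_0)(1+s^2)$; for $s\in[0,1]$, $g_k$ is nonincreasing, so $g_k(s)\le g_k(0)$, and by Fubini $g_k(0)=\int_0^1\rho\,g_k''(\rho)\,d\rho=\int_0^1(-\sigma_k'(\rho))\,d\rho=\sigma_k(0)\le2+\sigma_0$. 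Thus \eqref{hyp.s.0} holds with $C_1=2+\sigma_0$.

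Finally, for the uniform convergence: given a compact $K\subset[0,M]$ of $[0,\infty)$ and $k>M$ one has $\tilde\sigma_k'=\sigma'$ on $[1/k,M]$, so on any fixed $[\delta,M]\subset(0,\infty)$ and for $k$ large, $\sigma_k'=\chi_{1/k^2}*\sigma'\to\sigma'$ uniformly by uniform continuity of $\sigma'$ on $[\delta/2,M+1]$, whence $\sigma_k\to\sigma$ uniformly on $[\delta,M]$ after integration; and for $s\in[0,\delta]$, $|\sigma_k(s)-\sigma(s)|\le|\sigma_k(\delta)-\sigma(\delta)|+\int_0^\delta(|\sigma_k'|+|\sigma'|)\,d\rho\le|\sigma_k(\delta)-\sigma(\delta)|+(2+2\sigma_0)\delta$ by \eqref{volvic} and (H2), so letting $k\to\infty$ and then $\delta\to0$ finishes the proof. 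The delicate point of the whole argument is the middle range $((k-1)/k^2,k+1/k^2)$ in the estimate of $\sigma_k'$: the crude bound \eqref{gaston} degrades like $k^{-\theta}$ as $k\to\infty$ and does not yield a $k$-uniform lower bound, so one is forced back to the convolution and must combine the steepened slope of $\tilde\sigma_k'$ just below $1/k$ with the non-degeneracy of $\sigma'$ supplied by (H2).
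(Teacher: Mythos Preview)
Your proof is correct and close in spirit to the paper's, differing mainly in how the upper bound of \eqref{hyp.s} is handled on the middle range $((k-1)/k^2,k+1/k^2)$. Rather than your subdivision into $s\le 2$ and $s\ge 2$, the paper first establishes the single inequality $\tilde\sigma_k'(s)\le -C\,ks/[(1+ks)(1+s)^\theta]$ on \emph{all} of $[0,\infty)$ (by treating the three pieces of $\tilde\sigma_k'$ separately), and then transfers it to $\sigma_k'$ in one stroke via the observation that for $s\ge(k-1)/k^2$ and $k\ge 4$ one has $s/2\le s\pm 1/k^2\le 2s$, so convolution against $\chi_{1/k^2}$ only shifts the argument within a fixed multiplicative factor and the form of the bound is preserved up to constants. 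This is tidier and bypasses your sub-case analysis; what it buys is that the scaling $s\pm 1/k^2\asymp s$ does all the work at once, whereas your route requires separate quantitative control in each sub-interval. One wrinkle shared by both arguments: your appeal to ``the upper bound on $\sigma'$ from (H2) for $t\ge 1/k$'' and the paper's line $\tilde\sigma_k'(s)\le -\sigma_1/(1+s^\theta)$ for $s\in[1/k,k]$ both use the quantitative decay $\sigma'(s)\le -\sigma_1/(1+s^\theta)$ on the interval $(0,1)$, whereas (H2) as written only asserts $\sigma'<0$ there; this is an imprecision in the paper's hypothesis rather than a defect of your reasoning. Your treatments of \eqref{hyp.s.0} and of the uniform convergence are minor variants of the paper's and are fine.
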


\begin{proof}
The first inequality in \eqref{hyp.s} having already been proved in \eqref{volvic}, we concentrate on the second inequality and first establish a similar estimate for $\tilde{\sigma}'_k.$ As the surface tension $\sigma$ satisfies \eqref{case.2}, there holds 
\begin{eqnarray*}
\tilde{\sigma}'_k(s) & \leq & - \dfrac{{ \sigma_1}}{(1+ s^\theta)} \leq -\dfrac{2^{\theta-1} { \sigma_1}}{(1+s)^{\theta}} \quad \mbox{ for }\;\; s\in \left[ \frac{1}{k} , k \right]\,, \\
\tilde{\sigma}'_k(s) & \leq & - \dfrac{s}{k^{1+\theta}} \leq - \frac{1}{(1+s)^\theta} \quad \mbox{ for }\;\; s\ge k \,,
\end{eqnarray*}
whence
$$
\tilde{\sigma}'_k(s) \le - \dfrac{C\, ks}{(1+ks) (1+s)^\theta} \quad \mbox{ for }\;\; s\ge \frac{1}{k} 
$$
since $ks/(1+ks)\le 1$ for $s\ge 0$. Also, 
$$
\tilde{\sigma}'_k(s) \le -ks \le - \dfrac{ks}{(1+ks)(1+s)^\theta} \quad \mbox{ for }\;\; s\in \left[ 0 , \frac{1}{k} \right]\,. 
$$
Consequently, if $s\ge (k-1)/k^2$, we have $s-(1/k^2)\ge (k-2)/k^2$ and, as $k\ge 4$, 
$$
2s \ge s + \dfrac{1}{k} \ge s + \dfrac{1}{k^2} \ge s - \dfrac{1}{k^2} \ge \dfrac{s}{2}\,,
$$
we have
$$
\sigma'_k(s) \leq -\dfrac{C\, (k(s - \frac{1}{k^2}))}{(1 + k (\frac{1}{k^2} + s)) (1+ (\frac{1}{k^2} + s))^{\theta}} \leq -\dfrac{C_2\, ks}{(1 + ks) (1+s)^{\theta}} \quad \mbox{ for }\;\; s\ge \frac{k-1}{k^2}\,.
$$
Since $\sigma'_k(s) = \tilde{\sigma}'_k(s)$ for $s\le (k-1)/k^2$ by \eqref{fantasio}, we end up with
$$
\sigma'_k(s) \leq - \dfrac{C_2\, ks}{(1+s)^{\theta} (1+ks)}\quad \mbox{ for }\;\; s\ge 0\,,
$$
and thus obtain \eqref{hyp.s}. We next note that, given $R>0$ and $s\in [0,R]$, it follows from \eqref{case.2} that, for $k\ge R$, we have
\begin{eqnarray*}
\left| \tilde{\sigma}_k(s) - \sigma(s) \right| & = & \left| \int_{\min{\{s,1/k\}}}^{1/k} \left[ \left( k \sigma'\left( \frac{1}{k} \right) - k \right)\ r - \sigma'(r) \right]\ dr \right| \\ 
& \le & \left( \left| \sigma'\left( \frac{1}{k} \right) \right| + 1 \right)\ \frac{1}{2k} + \sigma\left( \frac{1}{k} \right) - \sigma\left( \min{\left\{ s , \frac{1}{k} \right\}} \right)  \\
& \le & \frac{1+\sigma_0}{2k} + \sigma(0) - \sigma\left( \frac{1}{k} \right) 
\end{eqnarray*}
and
$$
\left| \tilde{\sigma}_k'(s) \right| \le \frac{1+\sigma_0}{k}  \quad \mbox{ for }\;\; s\in \left[ -\frac{1}{k^2} , 0 \right]\,.
$$
Consequently, owing to the continuity of $\sigma$ in $[0,\infty)$ and the properties of the convolution, the sequences $(\tilde{\sigma}_k)_k$ and $(\sigma_k)_k$ converge uniformly  to $\sigma$ on compact subsets of $[0,\infty).$

Finally, integrating the bound \eqref{volvic} gives $g_k(s)\le (2+\sigma_0)\ (s\ln{s} - s +1) \le (2+\sigma_0) (1+s^2)$ for $s\ge 0$, whence \eqref{hyp.s.0}.
\end{proof}

\subsection{Compactness}\label{sec:comp}

Let $T>0$. The main difference here with the strategy employed in {\bf Section~\ref{sec:cvgce}} is that we no longer have an estimate on $(\Gamma_k)_k$ in $L_\infty(0,T;L_2(0,1))$ but only in $L_\infty(0,T;L_1(0,1))$, and this requires a different approach to the compactness issue for $(\Gamma_k)_k.$ Let us collect the estimates available for $(h_k,\Gamma_k)_k$ which result from \eqref{a1}--\eqref{a4}, \eqref{a2021}, and the nonnegativity of $g_k$:  
\begin{enumerate}
\item Conservation of matter: for  $t \in [0,T],$ there holds
\begin{equation} \label{est.l1}
\int_{0}^1 h_k(t,x) \text{d$x$} = \int_{0}^1 h_0(x) \text{d$x$}\,, \qquad \int_{0}^1 \Gamma_k(t,x) \text{d$x$} = \int_{0}^1 \Gamma_0(x) \text{d$x$} \,.
\end{equation}
\item Energy estimate: for $t \in [0,T],$ there holds
\begin{equation} \label{est.gamma}
\begin{split}
\dfrac{1}{2}\ \int_{0}^1 |\partial_x h_k(t,x)|^2\, \text{d$x$} & + \int_0^T \int_{0}^1 \left[ \frac{h_k^3(s,x)\, \mathbf{1}_{(0,\infty)}(h_k{ (s,x)})}{21}|\partial_x^3 h_k(s,x)|^2 + \dfrac{h_k(s,x)}{8} |\partial_x \sigma_k(\Gamma_k(s,x))|^2 \right] \text{d$x$d$s$} \\
& - D \int_0^T \int_{0}^1 \dfrac{\sigma'_k(\Gamma_k(s,x))}{\Gamma_k(s,x)} |\partial_x \Gamma_k(s,x)|^2 \text{d$x$d$s$} \leq 
\int_{0}^1  \left[ \dfrac{|\partial_x h_0(x) |^2}{2} + g_k(\Gamma_0(x)) \right]\text{d$x$} . 
\end{split}
\end{equation}
\end{enumerate}
Moreover, both $h_k$ and $\Gamma_k$ are nonnegative a.e. in $(0,T)\times (0,1),$ and $\|g_k(\Gamma_0)\|_1 \le C_1\ (1+\|\Gamma_0\|_2^2)$ by \eqref{hyp.s.0}. Consequently,  \eqref{est.l1} and \eqref{est.gamma}, together with the lower bound \eqref{hyp.s} on $-\sigma'_k$ and the Poincar\'e inequality yield:
\begin{itemize}
\item[{\bf (B.1)}] $(h_k)_{k}$ is bounded in $L_\infty(0,T;H^1(0,1))$ and $(\Gamma_k)_{k}$ is bounded in $L_\infty(0,T;L_1(0,1)).$
\item[{\bf (B.2)}] $(h_k^{3/2}\, \mathbf{1}_{(0,\infty)}(h_k)\, \partial_x^{3}h_k)_{k},$ $(\partial_x \Gamma_k / (1+\Gamma_k)^{(1+\theta)/2})_k$, and $(\sqrt{h_k}\partial_x \sigma_k(\Gamma_k))_k$ are bounded in $L_2((0,T) \times (0,1)).$
\end{itemize}


We then infer from \eqref{a14} (with surface tension $\sigma_k$), {\bf (B.1)}, and the embedding of $H^1(0,1)$ in $L_\infty(0,1)$ that
\begin{equation}
(h_k)_k \;\mbox{ is bounded in }\;{ L_\infty ((0,T)\times (0,1))} \;\;\mbox{ and }\;\; (\partial_t h_k)_k \;\mbox{ is bounded in }\; L_2(0,T;(H^1(0,1))^*)\,. \label{evian}
\end{equation}
Next, we prove the following embedding:

\begin{lemma} \label{lem.emb}
Let { $\Gamma$ be a nonnegative function in $L_1(0,1)$ such that $(1+\Gamma)^{(1-\theta)/2} \in H^1(0,1)$.} Then
there exists $C_{\theta} < \infty$ depending only on $\theta$  such that, after possibly redefining $\Gamma$ on a set of measure zero, $\Gamma \in \mathcal{C}^{0,(1-\theta)/2}([0,1])$ together with
$$
\|\Gamma\|_{\mathcal C^{0,(1-\theta)/2}([0,1])} \leq C_{\theta} 
\left[ 1 + \int_0^1 \Gamma(x) \text{d$x$}\right] \left[ 1 + \int_0^1 \dfrac{|\partial_x \Gamma(x)|^2}{ (1+\Gamma(x))^{(1+\theta)}}\text{d$x$}\right].
$$ 
\end{lemma}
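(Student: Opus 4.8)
The natural substitution is $w := (1+\Gamma)^{(1-\theta)/2}$, which by hypothesis belongs to $H^1(0,1)$ and hence (by the Sobolev embedding in one dimension) to $\mathcal{C}^{0,1/2}([0,1])$, after redefining $\Gamma$ on a null set. One computes $\partial_x w = \frac{1-\theta}{2}\,(1+\Gamma)^{-(1+\theta)/2}\,\partial_x\Gamma$, so that
\begin{equation*}
\int_0^1 |\partial_x w(x)|^2\,\text{d$x$} = \left( \frac{1-\theta}{2} \right)^2 \int_0^1 \frac{|\partial_x \Gamma(x)|^2}{(1+\Gamma(x))^{1+\theta}}\,\text{d$x$} =: \left( \frac{1-\theta}{2} \right)^2 A,
\end{equation*}
which is exactly the dissipation-type quantity appearing in the statement. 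The plan is therefore: first bound $\|w\|_{\mathcal{C}^{0,1/2}}$ in terms of $\|w\|_{L_1}$ and $\|\partial_x w\|_{L_2}$; then convert back to $\Gamma = w^{2/(1-\theta)} - 1$ and use the elementary inequality $|a^{p}-b^{p}| \le p\,\max\{a,b\}^{p-1}\,|a-b|$ (with $p = 2/(1-\theta) \ge 2$) together with the already-established uniform bound on $w$ to transfer Hölder continuity from $w$ to $\Gamma$; finally, relate $\|w\|_{L_1}$ back to $\int_0^1 \Gamma\,\text{d$x$}$.

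For the first step I would use the following standard argument. Since $\int_0^1 |\partial_x w|\,\text{d$x$} \le \|\partial_x w\|_{L_2}$, the function $w$ has bounded variation, and combined with $\|w\|_{L_1}$ one gets a pointwise bound: there is some $x_0$ with $w(x_0) = \|w\|_{L_1} \le$ (mean value), and then $w(x) \le w(x_0) + \int_0^1|\partial_x w| \le \|w\|_{L_1} + \|\partial_x w\|_{L_2}$ for every $x$, so $\|w\|_{L_\infty} \le \|w\|_{L_1} + \|\partial_x w\|_{L_2}$. For the Hölder seminorm, $|w(x) - w(y)| = \left| \int_y^x \partial_x w\,\text{d$z$} \right| \le \|\partial_x w\|_{L_2}\,|x-y|^{1/2}$ by Cauchy--Schwarz. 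Hence
\begin{equation*}
\|w\|_{\mathcal{C}^{0,1/2}([0,1])} \le \|w\|_{L_1} + 2\,\|\partial_x w\|_{L_2} = \|w\|_{L_1} + (1-\theta)\,\sqrt{A} \le C_\theta\,(1 + \|\Gamma\|_{L_1})\,(1 + \sqrt{A}),
\end{equation*}
where in the last step I use $w = (1+\Gamma)^{(1-\theta)/2} \le 1 + \Gamma$ (since $(1-\theta)/2 \le 1$ and $1+\Gamma \ge 1$), so $\|w\|_{L_1} \le 1 + \|\Gamma\|_{L_1}$.

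The transfer from $w$ to $\Gamma$ is the step requiring a little care, and is where the product structure $(1+\|\Gamma\|_{L_1})(1+A)$ — rather than $(1+\|\Gamma\|_{L_1})(1+\sqrt{A})$ — emerges. Write $\Gamma + 1 = w^{2/(1-\theta)}$ with $p := 2/(1-\theta)$. For $x,y \in [0,1]$,
\begin{equation*}
|\Gamma(x) - \Gamma(y)| = \left| w(x)^p - w(y)^p \right| \le p\,\|w\|_{L_\infty}^{p-1}\,|w(x) - w(y)| \le p\,\|w\|_{L_\infty}^{p-1}\,[w]_{\mathcal{C}^{0,1/2}}\,|x-y|^{1/2},
\end{equation*}
and similarly $\|\Gamma\|_{L_\infty} = \|w\|_{L_\infty}^p - 1 \le \|w\|_{L_\infty}^p$. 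Thus $\|\Gamma\|_{\mathcal{C}^{0,1/2}} \le C_\theta\,\|w\|_{\mathcal{C}^{0,1/2}}^{p}$. Now $p \ge 2$, and one absorbs the extra powers using $\|w\|_{\mathcal{C}^{0,1/2}} \le C_\theta (1 + \|\Gamma\|_{L_1})(1 + \sqrt{A})$: raising to the power $p$ and using $(1+t)^p \le C_{\theta}(1 + t^p)$ together with the fact that for the present application $\|\Gamma\|_{L_1}$ is controlled by a single power (so $(1+\|\Gamma\|_{L_1})^p$ can be rewritten, via Young's inequality, as a constant times $(1+\|\Gamma\|_{L_1})(1 + (1+\sqrt A)^{q})$ for suitable $q$) — but cleaner is to observe directly that $(1+\sqrt{A})^{p}$ with $p\ge 2$ is dominated by $C_\theta(1+A)^{p/2}$, and $(1+A)^{p/2}$ needs to be bounded by $C_\theta(1+A)$ times something; \emph{this last reduction forces $p/2 \le$ the exponent one is allowed}, i.e. it is precisely where $\theta \in [0,1)$ being bounded away from $1$ in the relevant range matters. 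I expect the statement as written is obtained by being slightly more generous, tracking that in fact $A$ itself (not $\sqrt A$) is the quantity that multiplies $(1+\|\Gamma\|_{L_1})$, so one simply estimates $\|w\|_{\mathcal{C}^{0,1/2}}^{p} \le C_\theta (1+\|\Gamma\|_{L_1})^p (1+A)^{p/2}$ and then uses the interpolation/Young inequality $(1+\|\Gamma\|_{L_1})^{p-1} \le C_\theta(1 + \text{(higher moment of }\Gamma))$ — but since here only $\|\Gamma\|_{L_1}$ is available, the honest conclusion is the weaker-looking product $(1+\|\Gamma\|_{L_1})(1+A)$ after also noting $(1+\sqrt A)^2 \le 2(1+A)$ and that the remaining powers of $\|\Gamma\|_{L_1}$ beyond the first are harmless because in the application $\Gamma \in L_2$ a priori. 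The main obstacle, then, is purely bookkeeping: getting the exponents of $\|\Gamma\|_{L_1}$ and of $A$ to land exactly on the form stated, which amounts to choosing how aggressively to apply Young's inequality when converting $w^p$-bounds back to $\Gamma$; everything else is the two elementary one-dimensional embedding estimates above.
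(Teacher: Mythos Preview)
Your substitution $w=(1+\Gamma)^{(1-\theta)/2}$ is natural, but the bookkeeping step you flag as ``purely bookkeeping'' is in fact a real obstruction, and your proposed fixes do not close it. With $p=2/(1-\theta)$ you end up needing to control $\|w\|_{\mathcal C^{0,1/2}}^{\,p}$, and since $\|w\|_\infty$ contains a term of order $\sqrt A$ this produces a contribution $A^{p/2}=A^{1/(1-\theta)}$. For $\theta>0$ this exponent exceeds~$1$ and cannot be absorbed into $(1+\|\Gamma\|_1)(1+A)$. Your remark that ``the remaining powers of $\|\Gamma\|_{L_1}$ are harmless because in the application $\Gamma\in L_2$ a priori'' is circular: in the paper the $L_2$ bound on $\Gamma_k$ is \emph{derived} from this very lemma (via $\|\Gamma_k\|_2^2\le\|\Gamma_k\|_\infty\|\Gamma_k\|_1$), so one cannot assume it here.

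The paper avoids this by working with $\sqrt{1+\Gamma}$ (power $1/2$, \emph{independent of $\theta$}) rather than $(1+\Gamma)^{(1-\theta)/2}$. One writes
\[
\big|\sqrt{1+\Gamma(x)}-\sqrt{1+\Gamma(y)}\big|\le\int_x^y\frac{|\partial_x\Gamma|}{\sqrt{1+\Gamma}}\,\text{d}z
\le\Big[\int_x^y\frac{|\partial_x\Gamma|^2}{(1+\Gamma)^{1+\theta}}\,\text{d}z\Big]^{1/2}\Big[\int_x^y(1+\Gamma)^\theta\,\text{d}z\Big]^{1/2},
\]
and then applies H\"older a second time to the last factor: $\int_x^y(1+\Gamma)^\theta\,\text{d}z\le(1+\|\Gamma\|_1)^\theta|x-y|^{1-\theta}$. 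This double H\"older is the key trick: it simultaneously produces the H\"older exponent $(1-\theta)/2$ and the factor $(1+\|\Gamma\|_1)^{\theta/2}$, yielding $\|\sqrt{1+\Gamma}\|_{\mathcal C^{0,(1-\theta)/2}}\le C_\theta(1+\|\Gamma\|_1)^{1/2}(1+A)^{1/2}$. Now $\Gamma=(\sqrt{1+\Gamma})^2-1$, and squaring (i.e.\ $p=2$ exactly, regardless of $\theta$) gives the stated product $(1+\|\Gamma\|_1)(1+A)$ on the nose.
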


\begin{proof} { Set 
$$
G := \frac{4}{(1-\theta)^2} \| \partial_x (1+\Gamma)^{(1-\theta)/2} \|_2^2=\int_0^1 \dfrac{|\partial_x \Gamma(x)|^2}{(1+\Gamma(x))^{1+\theta}} \text{d$x$}< \infty .
$$}
We assume $\Gamma$ to be smooth for simplicity and focus on the distance $\sqrt{1 + \Gamma(x)} - \sqrt{1 + \Gamma(y)}$ for $0\le x\le y\le 1$. 
Then, by H\"older's inequality
\begin{eqnarray*}
\left| \sqrt{ 1 + \Gamma(x)} \right. & - & \left. \sqrt{{ 1} + \Gamma(y)} \right| \le \int_x^y \dfrac{|\partial_x \Gamma(z)|}{\sqrt{{ 1} + \Gamma(z)}} \text{d$z$} \le \left[ \int_{x}^y \dfrac{|\partial_x \Gamma(z)|^2}{ (1+\Gamma(z))^{1+\theta}} \text{d$z$}\right]^{1/2} \left[ \int_{x}^y (1+\Gamma(z))^{\theta} \text{d$z$}\right]^{1/2}\\[8pt]
& \le & { \sqrt{G}\ \left[ \int_x^y (1+\Gamma(z))\, \text{d$z$} \right]^{\theta/2}\ |y-x|^{(1-\theta)/2}} \le { \sqrt{G}\ \left( 1 + \|\Gamma\|_1 \right)^{\theta/2}\ |y-x|^{(1-\theta)/2}} \,.
\end{eqnarray*}
 Since
$$
\int_0^1 \sqrt{1+\Gamma(z)} \text{d$z$} \leq \left( 1 + \|\Gamma\|_1 \right)^{1/2}\,,
$$
integrating the above inequality with respect to $y$ over $(0,1)$ ensures that $\|\sqrt{1+\Gamma}\|_\infty\le \left( 1 + \|\Gamma\|_1 \right)^{1/2} + \sqrt{G}\ \left( 1 + \|\Gamma\|_1 \right)^{\theta/2}$, so that there exists $C_{\theta}$ depending on $\theta$ only such that 
$$
\|\sqrt{{ 1}+ \Gamma}\|_{\mathcal{C}^{ 0,(1-\theta)/2}([0,1])} \leq C_{\theta} 
\left[ 1 + \int_0^1 \Gamma(x) \text{d$x$}\right]^{1/2} \left[ 1 + \int_0^1 \dfrac{|\partial_x \Gamma(x)|^2}{ (1+\Gamma(x))^{1+\theta}}\text{d$x$}\right]^{1/2}.
$$
We conclude  using the classical trick $\Gamma = (\sqrt{{ 1} + \Gamma})^2 - { 1}.$ 
\end{proof}

{ Now, {\bf (B.1)}, {\bf (B.2)}, and {\bf Lemma~\ref{lem.emb}} yield that 
\begin{equation}
(\Gamma_k)_{k} \;\mbox{ is bounded in }\; L_\infty(0,T;L_1(0,1)) \cap L_1(0,T ; \mathcal{C}^{0, (1-\theta)/2}([0,1]))\,.
\label{alet}
\end{equation}
In particular, since $\|\Gamma_k\|_2^2 \le \|\Gamma_k\|_\infty\, \|\Gamma_k\|_1$, we have that
\begin{equation}
(\Gamma_k)_{k} \;\mbox{ is bounded in }\; L_2((0,T)\times (0,1))\,.
\label{luchon}
\end{equation}
Owing to \eqref{hyp.s}, a first consequence of \eqref{luchon} is that $(\sigma_k(\Gamma_k))_k$ is also bounded in $L_2((0,T)\times (0,1))$. Furthermore, it follows from \eqref{est.gamma}, \eqref{hyp.s}, and \eqref{luchon} that 
\begin{eqnarray*}
\int_0^T \int_0^1 \left| \partial_x \sigma_k(\Gamma_k) \right|^{4/3}\, \text{d$x$d$s$} & = &  \int_0^T \int_0^1 \left( \frac{- \sigma_k'(\Gamma_k)}{\Gamma_k} \right)^{2/3}\ \left| \partial_x\Gamma_k \right|^{4/3}\ \left( \Gamma_k\ |\sigma_k'(\Gamma_k) | \right)^{2/3}\, \text{d$x$d$s$} \\
& \le & \left( \int_0^T \int_0^1 \frac{- \sigma_k'(\Gamma_k)}{\Gamma_k}\ \left| \partial_x\Gamma_k \right|^2\, \text{d$x$d$s$} \right)^{2/3}\ \left( \int_0^T \int_0^1 \left( \Gamma_k\ |\sigma_k'(\Gamma_k) | \right)^2\, \text{d$x$d$s$}\right)^{1/3} \\
& \le & C(T)\ (2+\sigma_0)^2\ \left( \int_0^T \int_0^1 \Gamma_k^2\, \text{d$x$d$s$}\right)^{1/3} \le C(T)\,.
\end{eqnarray*} 
Consequently,
\begin{equation}
(\sigma_k(\Gamma_k))_k \;\mbox{ is bounded in }\; L_{4/3}(0,T;W_{4/3}^1(0,1))\,.
\label{contrexeville}
\end{equation}
Finally, \eqref{a15} (with surface tension $\sigma_k$), {\bf (B.1)}, {\bf (B.2)}, \eqref{evian}, and \eqref{alet} guarantee that}
\begin{equation}
(\partial_t \Gamma_k)_{ k} \;\mbox{ is bounded in }\; L_{ 1}(0,T;({ H^2_N(0,1)})^*)\,,
\label{vichy}
\end{equation}
{ where $H_N^2(0,1):=\{ w\in H^2(0,1)\ :\ \partial_x w(0)=\partial_x w(1)=0 \}$.
Hence, { owing to the compactness of the embeddings of $H^1(0,1)$ and $\mathcal{C}^{0,(1-\theta)/2}([0,1])$ in $\mathcal{C}([0,1])$ and the continuity of the embedding of $\mathcal{C}([0,1])$ in either $(H^1(0,1))^*$ or $(H_N^2(0,1))^*,$ we infer from {\bf (B.1)}, \eqref{evian}, \eqref{alet}, \eqref{vichy}, and \cite[Corollary~4]{Si87} that there are a subsequence of $(h_k,\Gamma_k)_k$ (not relabeled)} and functions $h$ and $\Gamma$ such that
\begin{equation} \label{conv.Gh}
h_k \to h \text{ in $\mathcal{C}([0,T]\times [0,1])$} , \qquad  \Gamma_k \to \Gamma \text{ in ${ L_1}(0,T;\mathcal{C}([0,1]))$}\,.
\end{equation}
In addition, $(\partial_x h_{k})_k$ being bounded in $L_\infty(0,T;L_2(0,1))$  by {\bf (B.1)} { and $(\partial_x \sigma_k(\Gamma_k))_k$ { being bounded} in $L_{4/3}((0,T)\times (0,1))$ by \eqref{contrexeville},} we have, up to an extraction of a subsequence { and for some function $\Sigma$},
\begin{equation} \label{conv.dxh}
\partial_x h_k \rightharpoonup \partial_x h  \quad \text{weakly-$\star$ in $L_\infty(0,T;L_2(0,1))$ and } { \partial_x \sigma_k(\Gamma_k) \rightharpoonup \Sigma  \quad \text{in $L_{4/3}((0,T)\times (0,1))$}}\,. 
\end{equation}
As a consequence of { {\bf (B.1)}, \eqref{luchon}, \eqref{conv.Gh}, and \eqref{conv.dxh},} we get that the { limits satisfy}
\begin{equation} \label{reg.h}
h \in L_\infty(0,T ; H^1(0,1)), \quad { h\ge 0\,, }\quad \Gamma \in L_\infty(0,T;L_1(0,1))\cap L_2((0,T) \times (0,1))\,, \quad { \Gamma\ge 0}\,.
\end{equation}
{ Finally, thanks to {\bf (B.1)}, \eqref{conv.dxh}, and \eqref{reg.h}, we have
$$
\int_0^T \int_0^1 |\Gamma_k(t,x) - \Gamma(t,x)|^2\, \text{d$x$d$t$} \le \sup_{s\in [0,T]}{\left\{ \|\Gamma_k(s)\|_1+\|\Gamma(s)\|_1 \right\}}\ \int_0^T \|\Gamma_k(t) - \Gamma(t)\|_\infty\, \text{d$t$} \mathop{\longrightarrow}_{k\to\infty} 0\,,
$$
so that we also have
\begin{equation}
\Gamma_k \to \Gamma \text{ in ${ L_2}((0,T)\times (0,1))$}\,. \label{conv.sigma2}
\end{equation}
}

\subsection{ Identifying the limit system}\label{sec:ils}

According to the uniform bounds { {\bf (B.1)}}, {\bf(B.2)}, { and \eqref{evian},} we first obtain that, up to an extraction of a subsequence { and for some function $\overline{\jmath}_1$ and $\overline{\jmath}_2$},
\begin{equation} \label{conv.0}
h_k^{3/2}\ { \mathbf{1}_{(0,\infty)}(h_k)}\ \partial_x^3 h_k \rightharpoonup \overline{\jmath}_1, \qquad h_k^{ 1/2}\ \partial_x \sigma_k(\Gamma_k) \rightharpoonup \overline{\jmath}_2, \qquad \text{in $L_2((0,T)\times (0,1)).$}
\end{equation}
{ Arguing as in \cite[Section~3]{BF90} and \cite[Section~3.4]{GW06}, we first deduce from {\bf (B.2)} and \eqref{conv.Gh} that $\partial_x^3 h$ belongs to $L_2(\mathcal{P}(\delta))$ for all $\delta>0$ where $\mathcal{P}(\delta) := \{ (t,x)\in (0,T)\times (0,1)\ :\ h(t,x)>\delta \}$ and $\overline{\jmath}_1 = h^{3/2} \partial_x^3 h$ in $ \{ (t,x)\in (0,T)\times (0,1)\ :\ h(t,x)>0 \}$. Combining this result with \eqref{conv.Gh} yields 
\begin{eqnarray*}
& & \lim_{k\to\infty} \int_0^T \int_0^1 h_k^3\, \mathbf{1}_{(0,\infty)}(h_k)\, \partial_x^3 h_k\ \partial_x \zeta \, \text{d$x$d$s$} = \int_0^T \int_0^1 h^3\, \mathbf{1}_{(0,\infty)}(h)\, \partial_x^3 h\  \partial_x \zeta \, \text{d$x$d$s$} \,, \\
& & \lim_{k\to\infty} \int_0^T \int_0^1 h_k^2\, \mathbf{1}_{(0,\infty)}(h_k)\, \Gamma_k\, \partial_x^3 h_k\ \partial_x \zeta \, \text{d$x$d$s$} = \int_0^T \int_0^1 h^2\, \mathbf{1}_{(0,\infty)}(h)\, \Gamma\, \partial_x^3 h\  \partial_x \zeta \, \text{d$x$d$s$} \,,
\end{eqnarray*}
for any $\zeta \in {\mathcal{C}}^{\infty}( [0,T] \times [0,1])$ such that $\zeta(T,x)= 0$ for all $x \in [0,1]$ and $\partial_x \zeta(t,x) = 0$ for all $(t,x) \in [0,T] \times \{0,1\}.$ We next} claim the strong convergence
\begin{equation} \label{conv.sigma}
\sigma_k(\Gamma_k) \longrightarrow \sigma(\Gamma) \quad \text{in ${ L_2((0,T)\times(0,1))}$}.
\end{equation}
{ Indeed, on the one hand, we readily infer from \eqref{hyp.s} and \eqref{conv.sigma2} that 
$$
\int_0^T \int_0^1 \left| \sigma_k(\Gamma_k) - \sigma_k(\Gamma) \right|^2\, \text{d$x$d$t$} \le (2+\sigma_0)^2\ \int_0^T \int_0^1 |\Gamma_k - \Gamma|^2\, \text{d$x$d$t$} \mathop{\longrightarrow}_{k\to\infty} 0\,.
$$ 
On the other hand, it follows from Proposition~\ref{pro.tec2} and \eqref{hyp.s} that $\sigma_k(\Gamma)\to\sigma(\Gamma)$ a.e. in $(0,T)\times (0,1)$ with $|\sigma_k(\Gamma)|\le (2+\sigma_0)\ (1+\Gamma)\in L_2((0,T)\times (0,1)),$ whence
$$
\lim_{k\to\infty} \int_0^T \int_0^1 \left| \sigma_k(\Gamma) - \sigma(\Gamma) \right|^2\, \text{d$x$d$t$} = 0
$$
by the Lebesgue dominated convergence theorem. Thus, \eqref{conv.sigma} holds true. 

Together { with \eqref{contrexeville}}, the convergence \eqref{conv.sigma} ensures that $\sigma(\Gamma)\in L_{4/3}(0,T;W_{4/3}^1(0,1))$ and $\Sigma=\partial_x\sigma(\Gamma)$ in \eqref{conv.dxh}. Next,} collecting \eqref{conv.Gh}, \eqref{conv.dxh}, and \eqref{conv.sigma} yields $\overline{\jmath}_2 = h^{ 1/2} \partial_x \sigma(\Gamma),$ so that $h^{ 1/2} \partial_x \sigma(\Gamma) \in L_2((0,T) \times (0,1)).$ { It is then straightforward to pass to the limit as $k\to\infty$ in the remaining terms in the weak formulation \eqref{a14}-\eqref{a15} for $(h_k,\Gamma_k)$ and conclude} that $(h,\Gamma)$ is a weak solution to \eqref{a1}--\eqref{a4} with surface tension $\sigma$ and initial data $(h_0,\Gamma_0).$ This completes the proof of {\bf Theorem~\ref{P.1}}.


\begin{remark}\label{re:final}%
We shall point out that our strategy to prove {\bf Theorem~\ref{P.1}} by approximating the surface tension $\sigma \in \mathcal{C}([0,\infty)) \cap \mathcal{C}^1(0,\infty)$ by surface tensions $(\sigma_k)_{k\in\mathbb N}$ satisfying the assumptions of {\bf Theorem~\ref{P.2}} does not { yield} existence of nonnegative weak solutions for the limiting case $\theta = 1.$ More precisely, if $\theta = 1$, then the analogue of {\bf Lemma~\ref{lem.emb}} for the energy dissipation merely yields a control on $\sqrt{\Gamma}$ in the space of continuous functions (instead of a H\"older space as in the case $\theta \in [0, 1)$), and we thus lose compactness of the concentration of any bounded family of solutions in $L_2((0,T)\times (0,1))$. It seems that this threshold is of high importance. Indeed, provided $-\sigma'(\Gamma)$ is dominated by $1/(1+\Gamma)$ at infinity, a good choice of multiplier for \eqref{a1}--\eqref{a2} yields that the integral \eqref{eq.diss} (with $\theta=1$) measures the dissipation of energy for any small solution. 
\end{remark}

\section*{Acknowledgement}
We gratefully acknowledge the hospitality of the Institut f\"ur Angewandte Mathematik der Leibniz Universit\"at Hannover and the Institut de Math\'{e}matiques de Toulouse, Universit\'{e} Paul Sabatier, where part of this work was done.

%
%

%
\end{document}